\definecolor{webgreen}{rgb}{0,.5,0}
\def\N{{\Bbb N}}
\def\Z{{\Bbb Z}}
\def\C{{\Bbb C}}
\def\1{{\bf 1}}
\newtheorem{theorem}{Theorem}[section]
\newtheorem{lemma}{Lemma}[section]
\newtheorem{corollary}{Corollary}[section]
\newtheorem{remark}{Remark}[section]
\begin{document}

\title{{\bf Trigonometric Representations of Generalized \\ Dedekind and Hardy Sums via the Discrete \\ Fourier Transform \thanks{In:  Analytic Number Theory - In Honor of Helmut Maier's 60th Birthday, C.~Pomerance and M.~Th.~Rassias (eds.), Springer, New York,
2015, pp. 329--343.}}}
\author{Michael Th. Rassias and L\'aszl\'o T\'oth}
\date{}
\maketitle

\centerline{\it To Professor Helmut Maier on his 60th birthday}

\vskip3mm

\abstract{We introduce some new higher dimensional generalizations of the Dedekind sums associated with the Bernoulli functions and
of those Hardy sums which are defined by the sawtooth function. We generalize a variant of Parseval's formula for the discrete Fourier
transform to derive finite trigonometric representations for these sums in a simple unified manner. We also consider a related sum involving the
Hurwitz zeta function.}

\medskip

{\bf 2010 Mathematics Subject Classification:} 11F20, 11L03

{\bf Key Words and Phrases:} Dedekind sums; Hardy sums; Bernoulli
polynomials and functions; Hurwitz zeta function; discrete Fourier
transform

\section{Introduction}

The classical Dedekind sum is defined for $h\in \Z$, $k\in \N:=\{1,2,\ldots\}$ by
\begin{equation*}
s(h,k):=\sum_{a \text{ (mod $k$)}} \left(\left(\frac{a}{k}\right)\right) \left(\left(\frac{ah}{k}\right)\right),
\end{equation*}
adopting the usual notation
\begin{equation*}
\left(\left(x\right)\right) :=\begin{cases} \{x\} - \frac{1}{2}, & \text{ if } \ x\not\in \Z, \\
0, & \text{ if } \ x\in \Z, \end{cases}
\end{equation*}
where $\{x\}:=x- \lfloor x \rfloor$ stands for the fractional part of $x$ {{{{(cf. \cite{IKW}, \cite{RadGro1972})}}}}.
If $\gcd(h,k)=1$, then $s(h,k)$ can be represented as
\begin{equation} \label{cot_repres}
s(h,k)= \frac1{4k} \sum_{a=1}^{k-1} \cot \left(\frac{\pi a}{k}\right) \cot \left(\frac{\pi ah}{k}\right)
\end{equation}
and
\begin{equation} \label{inf_repres}
s(h,k) = \frac1{2\pi} \sum_{\substack{r=1\\ r\not\equiv 0 {
\text{ (mod $k$)}}}}^{\infty} \frac1{r} \cot \left(\frac{\pi rh}{k} \right).
\end{equation}

The identities \eqref{cot_repres} and \eqref{inf_repres} were derived in 1933 by H.~Rademacher \cite{Rad1933}
in order to obtain a simple direct proof of the reciprocity formula for the Dedekind sums. See
also \cite[pp.\ 18--25]{RadGro1972}. According to \cite[p.\ 347]{Alm1998}, \eqref{cot_repres}
was obtained earlier, in 1923 by H.~Mellin. The identity \eqref{cot_repres} is also the starting point for
various generalizations of $s(h,k)$. See, e.g., the papers of
M.~Beck \cite{Bec2003}, U.~Dieter \cite{Die1984}, D.~Zagier
\cite{Zag1973}.

It is known that \eqref{cot_repres} is a direct consequence of a variant of Parseval's formula for the discrete
Fourier transform (DFT). See the paper of G.~Almkvist \cite[Sect.\ 6]{Alm1998} and the book by M.~Beck and S.~Robins
\cite[Ch.\ 7]{BecRob2007}. More specifically, consider a function $f:\Z \to \C$, which is $k$-periodic
(periodic with period $k$), where $k\in \N$. We define the DFT of
$f$ as the function $\widehat{f}={\cal F}(f)$, given by
\begin{equation*}
\widehat{f}(n):= \sum_{a \text{ (mod $k$)}} f(a) e^{-2\pi i an/k}
\quad (n\in \Z).
\end{equation*}

Furthermore, if $f_1$ and $f_2$ are $k$-periodic functions, then
their inner product is
\begin{equation*}
\langle f_1,f_2\rangle := \sum_{a \text{ (mod $k$)}} f_1(a)\overline{f_2(a)},
\end{equation*}
having the property
\begin{equation*}
\langle f_1,f_2\rangle = \frac1{k} \langle \widehat{f_1}, \widehat{f_2} \rangle,
\end{equation*}
or equivalently,
\begin{equation} \label{Parseval}
\sum_{a \text{ (mod $k$)}} f_1(a)f_2(-a) = \frac1{k} \sum_{a \text{ (mod $k$)}} \widehat{f_1}(a) \widehat{f_2}(a).
\end{equation}

Now, \eqref{cot_repres} follows by applying \eqref{Parseval} to the
functions $$f_1(a)=\left(\left(\frac{a}{k}\right)\right),\ \ f_2(a)=
\left(\left(\frac{ah}{k}\right)\right)$$ and using {{{{the fact}}}} that the DFT of
the sawtooth function is essentially the cotangent function.

It is the aim of this paper to exploit this idea in order to deduce similar finite trigonometric representations for certain
new generalized Dedekind and Hardy sums, in a simple unified manner. Our results are direct applications of a higher dimensional version of
the identity \eqref{Parseval}, included in Theorem \ref{th_main}. We derive in this way
Zagier-type identities for new higher dimensional generalizations of the Dedekind sums associated
to the Bernoulli functions and of those Hardy sums which are defined by the sawtooth function.
Note that all finite trigonometric representations we obtain contain only the tangent and cotangent functions, and are special cases of the
Dedekind cotangent sums investigated by M.~Beck \cite{Bec2003}. Therefore the reciprocity law proved in \cite[Th.\ 2]{Bec2003} can be applied
for each sum.

Furthermore, we consider a related sum, studied by M.~Mikol\'as \cite{Mik1957}, involving the Hurwitz
zeta function.  We remark that \eqref{Parseval} was used to
evaluate some finite trigonometric and character sums, but not Dedekind and related sums, by M.~Beck and
M.~Halloran \cite{BecHal2010}. We point out that the identity \eqref{inf_repres} can be obtained from
\eqref{cot_repres} using another general result (Lemma \ref{lemma_2} in Section \ref{Sect_Final_remarks}).


\section{Properties of the DFT}

We will apply the following general result.

\begin{theorem} \label{th_main} Let $f_1,\ldots,f_m$ be arbitrary
$k$-periodic functions and let $h_j \in \Z$, $\gcd(h_j,k) =1$ {\rm ($1\le j\le m$)}, where $m,k\in \N$. Then
\begin{equation*}
\sum_{\substack{a_1,\ldots,a_m \text{ {\rm (mod $k$)}} \\ a_1+\ldots+a_m\equiv 0 \text{ {\rm (mod $k$)}}}} f_1(a_1h_1)\cdots f_m(a_m h_m) = \frac1{k}
\sum_{a \text{ {\rm (mod $k$)}}} \widehat{f_1}(ah_1')\cdots \widehat{f_m}(ah_m'),
\end{equation*}
where $h_j'$ is the multiplicative inverse of $h_j$ {\rm (mod $k$)}, that is $h_jh_j'\equiv 1$ {\rm (mod $k$) with $1\le j\le m$}.
\end{theorem}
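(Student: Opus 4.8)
The plan is to detect the linear congruence constraint $a_1+\cdots+a_m\equiv 0$ by the orthogonality relation for additive characters modulo $k$, namely
\[
\frac1{k}\sum_{b \text{ (mod $k$)}} e^{2\pi i b t/k} = \begin{cases} 1, & t\equiv 0 \text{ (mod $k$)},\\ 0, & \text{otherwise}. \end{cases}
\]
Applying this with $t=a_1+\cdots+a_m$ replaces the restricted sum on the left-hand side by an unrestricted sum over all $a_1,\ldots,a_m \pmod k$, weighted by $\frac1{k}\sum_b e^{2\pi i b(a_1+\cdots+a_m)/k}$. After interchanging the order of summation and using that the exponential splits as $\prod_{j} e^{2\pi i b a_j/k}$, the $m$-fold sum factors into a product of $m$ single sums, yielding
\[
\text{LHS} = \frac1{k}\sum_{b \text{ (mod $k$)}} \prod_{j=1}^m \left(\sum_{a_j \text{ (mod $k$)}} f_j(a_j h_j)\, e^{2\pi i b a_j/k}\right).
\]

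The next step is to recognize each inner sum as a value of the DFT, and this is where the coprimality hypothesis $\gcd(h_j,k)=1$ enters. For fixed $j$ and $b$, since $h_j$ is invertible modulo $k$, the map $a_j\mapsto c:=a_j h_j$ is a bijection on the residues mod $k$, with inverse $a_j\equiv c h_j' \pmod k$. Because both $f_j(a_jh_j)$ and $e^{2\pi i b a_j/k}$ are $k$-periodic in $a_j$, this substitution is well defined on residue classes, and the inner sum becomes $\sum_{c} f_j(c)\, e^{2\pi i b c h_j'/k}$. Matching the kernel against $e^{-2\pi i cn/k}$ with $n=-bh_j'$, this is exactly $\widehat{f_j}(-b h_j')$ by the definition of the DFT.

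Substituting back gives $\text{LHS} = \frac1{k}\sum_{b} \prod_{j=1}^m \widehat{f_j}(-b h_j')$, and a final reindexing $a\equiv -b \pmod k$ (again a bijection on residues) converts this into $\frac1{k}\sum_{a}\prod_j \widehat{f_j}(a h_j')$, which is the claimed right-hand side. The argument is essentially routine once set up; the only points requiring care are bookkeeping items rather than genuine obstacles, namely verifying that the two changes of variable are bijective on $\Z/k\Z$ (guaranteed by invertibility of $h_j$ and of $-1$) and tracking the sign in the exponential so that the correct DFT value $\widehat{f_j}(ah_j')$ emerges after the reindexing. For $m=2$, $h_1=h_2=1$ this recovers the Parseval identity \eqref{Parseval}.
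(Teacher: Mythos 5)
Your proof is correct. The orthogonality relation detects the congruence constraint, the interchange of summation and the factorization into a product of single sums are valid, the substitution $c=a_jh_j$ is a bijection on residues precisely because $\gcd(h_j,k)=1$, and the sign bookkeeping giving $\widehat{f_j}(-bh_j')$ followed by the reindexing $a\equiv -b$ is sound (the final step uses, implicitly, that $\widehat{f_j}$ is itself $k$-periodic, which is immediate from the definition). However, your route is genuinely different from the paper's. The paper argues at the level of the DFT toolbox: it introduces the Cauchy convolution $f_1\otimes\cdots\otimes f_m$, invokes the convolution theorem $\mathcal{F}(f_1\otimes\cdots\otimes f_m)=\mathcal{F}(f_1)\cdots\mathcal{F}(f_m)$ together with the double-transform formula $\mathcal{F}(\mathcal{F}(f))(n)=kf(-n)$ (both quoted as standard facts from Terras and Beck--Robins), evaluates at $n=0$ to obtain the identity with all $h_j=1$, and only then inserts the twists $h_j$ via a separate dilation lemma: if $g(n)=f(nh)$ with $\gcd(h,k)=1$ then $\widehat{g}(n)=\widehat{f}(nh')$, proved by the same bijection argument you use. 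You instead detect the constraint directly by additive-character orthogonality and fold the dilation into a single change of variables, so your argument is entirely self-contained, using nothing beyond the definition of the DFT. What each approach buys: the paper's proof is shorter modulo the standard facts it cites and exhibits the theorem as a clean consequence of convolution plus Fourier inversion, which is conceptually tidy and reusable; your proof is what one gets by unwinding those cited facts down to orthogonality, which makes it more elementary and verifiable line by line, at the cost of redoing (in effect) the proofs of the convolution and inversion formulas inside the argument.
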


\begin{proof}
We only need some simple well known facts concerning the DFT. See, for instance
\cite[Ch.\ 2]{Ter1999}, \cite[Ch.\ 7]{BecRob2007}. The Cauchy convolution of the
$k$-periodic functions $f_1$ and $f_2$ is defined by
\begin{equation*}
(f_1\otimes f_2)(n):= \sum_{\substack{a_1,a_2 \text{ {\rm (mod $k$)}} \\ a_1+a_2\equiv n \text{ {\rm (mod $k$)}}}} f_1(a_1)f_2(a_2) =
\sum_{a \text{ {\rm (mod $k$)}}} f_1(a)f_2(n-a)  \quad (n\in \Z),
\end{equation*}
which is associative and commutative. Also, $$\widehat{f_1\otimes
f_2} = \widehat{f_1}\widehat{f_2}.$$
More generally, if
$f_1,\ldots,f_m$ are $k$-periodic functions, then $${\cal
F}(f_1\otimes \cdots \otimes f_m) = {\cal F}(f_1) \cdots {\cal
F}(f_m).$$ Recalling that $${\cal F}({\cal F}(f))(n)= k f(-n) \quad (n\in
\Z),$$
{{{{which is}}}} valid for every $k$-periodic $f$, this {{{{yields}}}}
\begin{equation*}
k(f_1\otimes \cdots \otimes f_m)(-n) ={\cal F}({\cal F}(f_1) \cdots
{\cal F}(f_m))(n),
\end{equation*}
that is
\begin{equation*}
\sum_{\substack{a_1,\ldots,a_m \text{ {\rm (mod $k$)}} \\ a_1+\ldots+a_m\equiv -n \text{ {\rm (mod $k$)}}}} f_1(a_1)\cdots f_m(a_m) = \frac1{k}
\sum_{a \text{ {\rm (mod $k$)}}}  \widehat{f_1}(a)\cdots \widehat{f_m}(a) e^{-2\pi i an/k}
\quad (n\in \Z).
\end{equation*}

For $n=0$ we obtain
\begin{equation} \label{simple}
\sum_{\substack{a_1,\ldots,a_m \text{ {\rm (mod $k$)}} \\ a_1+\ldots+a_m\equiv 0 \text{ {\rm (mod $k$)}}}} f_1(a_1)\cdots f_m(a_m) = \frac1{k}
\sum_{a \text{ {\rm (mod $k$)}}}  \widehat{f_1}(a)\cdots \widehat{f_m}(a).
\end{equation}

Now the result follows from \eqref{simple} by showing the following
property: Let $f$ be a $k$-periodic function and let $h\in \Z$ such
that $\gcd(h,k)=1$. Then the DFT of the function $g$ defined by
$g(n)=f(nh)$ ($n\in \Z$) is $\widehat{g}(n)=\widehat{f}(nh')$ ($n\in
\Z$), where $h'$ is the multiplicative inverse of $h$ (mod $k$).

Indeed,
\begin{equation*}
\widehat{g}(n)= \sum_{a \text{ {\rm (mod $k$)}}} g(a) e^{-2\pi i
an/k} = \sum_{a \text{ {\rm (mod $k$)}}} f(ah) e^{-2\pi i ahh'n/k},
\end{equation*}
and since $\gcd(h,k)=1$, if $a$ runs through a complete system of residues (mod $k$), then so does $b=ah$. Therefore,
\begin{equation*}
\widehat{g}(n)= \sum_{b \text{ {\rm (mod $k$)}}} f(b) e^{-2\pi i
bh'n/k} = \widehat{f}(nh').
\end{equation*}
\end{proof}

\begin{corollary} \label{cor_m_2_gen} Let $f_1$ and $f_2$ be $k$-periodic functions {\rm ($k \in \N$)} and
let $h_1,h_2\in \Z$, $\gcd(h_1,k)=\gcd(h_2,k)=1$. Then
\begin{equation} \label{Dedekind_m_2_general}
\sum_{a \text{ {\rm (mod $k$)}}} f_1(ah_1)f_2(ah_2) = \frac1{k}
\sum_{a \text{ {\rm (mod $k$)}}} \widehat{f_1}(-ah_2)
\widehat{f_2}(ah_1).
\end{equation}
\end{corollary}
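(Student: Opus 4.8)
The plan is to read \eqref{Dedekind_m_2_general} as the case $m=2$ of Theorem \ref{th_main}, reached after reflecting one of the functions and then performing a single change of variable in the transformed sum. The only genuine work is the bookkeeping that converts the inverse shifts $h_1',h_2'$ produced by the theorem into the shifts $h_1,h_2$ appearing on the right-hand side of \eqref{Dedekind_m_2_general}.

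First I would set $\tilde f_2(n):=f_2(-n)$, which is again $k$-periodic, and apply Theorem \ref{th_main} with $m=2$ to the functions $f_1,\tilde f_2$ and the shifts $h_1,h_2$. On the left, the congruence $a_1+a_2\equiv 0 \pmod k$ forces $a_2\equiv -a_1$, so that $\tilde f_2(a_2h_2)=f_2(-a_2h_2)=f_2(a_1h_2)$, and the double sum collapses to $\sum_{a \text{ (mod $k$)}} f_1(ah_1)f_2(ah_2)$, which is exactly the left-hand side of \eqref{Dedekind_m_2_general}. For the right-hand side I would record the elementary reflection rule for the DFT: substituting $a\mapsto -a$ in the defining sum gives $\widehat{\tilde f_2}(n)=\widehat{f_2}(-n)$. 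Hence Theorem \ref{th_main} yields the intermediate identity
\begin{equation*}
\sum_{a \text{ (mod $k$)}} f_1(ah_1)f_2(ah_2)=\frac1k\sum_{a \text{ (mod $k$)}} \widehat{f_1}(ah_1')\,\widehat{f_2}(-ah_2').
\end{equation*}

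The remaining step, which is the main (and essentially the only) obstacle, is to remove the inverses from the arguments on the right. Since $\gcd(h_1h_2,k)=1$, the map $a\mapsto -h_1h_2\,a$ is a bijection on the residues modulo $k$; reindexing the sum by it and using $h_1h_1'\equiv h_2h_2'\equiv 1 \pmod k$ turns $ah_1'$ into $-ah_2$ and $-ah_2'$ into $ah_1$. This transforms the intermediate identity into
\begin{equation*}
\sum_{a \text{ (mod $k$)}} f_1(ah_1)f_2(ah_2)=\frac1k\sum_{a \text{ (mod $k$)}} \widehat{f_1}(-ah_2)\,\widehat{f_2}(ah_1),
\end{equation*}
which is \eqref{Dedekind_m_2_general}. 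Throughout, the only points requiring care are the sign in the reflection $\tilde f_2(n)=f_2(-n)$ and the verification that the chosen substitution simultaneously produces both desired arguments.
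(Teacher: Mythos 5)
Your proof is correct and takes essentially the same route as the paper: both apply Theorem \ref{th_main} with $m=2$, collapse the constrained double sum, and then reindex the transformed sum by a unit multiple of $a$ (using $\gcd(h_1h_2,k)=1$) to trade the inverses $h_1',h_2'$ for $h_2,h_1$. The only cosmetic difference is where the sign enters: the paper keeps $f_2$ untouched, obtains $f_2(-ah_2)$ on the left, and substitutes $h_2:=-h_2$ at the very end, whereas you reflect $f_2$ at the outset and compensate through the DFT reflection rule $\widehat{\tilde f_2}(n)=\widehat{f_2}(-n)$ together with the sign built into your reindexing map $a\mapsto -h_1h_2a$.
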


\begin{proof} Apply Theorem \ref{th_main} for $m=2$.  We deduce that
\begin{align*}
\sum_{a \text{ {\rm (mod $k$)}}} f_1(ah_1)f_2(-ah_2) & = \frac1{k}
\sum_{a \text{ {\rm (mod $k$)}}} \widehat{f_1}(ah_1')
\widehat{f_2}(ah_2') \\
& = \frac1{k} \sum_{a \text{ {\rm (mod $k$)}}} \widehat{f_1}(ah_1'h_2'h_2)
\widehat{f_2}(ah_1'h_2'h_1) \\
& = \frac1{k} \sum_{b \text{ {\rm (mod $k$)}}} \widehat{f_1}(bh_2)
\widehat{f_2}(bh_1),
\end{align*}
by using the fact that if $a$ runs through a complete system of residues (mod $k$), then so does $b=ah_1'h_2'$, since $\gcd(h_1h_2,k)=1$.
This gives \eqref{Dedekind_m_2_general} by setting $h_2:=-h_2$.
\end{proof}

For $h_1=1$, $h_2=-1$ from \eqref{Dedekind_m_2_general} we {{{{derive}}}}
\eqref{Parseval}.

\begin{corollary} \label{coroll_2} Let $f_1$ and $f_2$ be $k$-periodic functions {\rm ($k\in \N$)} and assume that
$f_1$ or $f_2$ is odd (resp. even).
Let $h_1,h_2\in \Z$, $\gcd(h_1,k)=\gcd(h_2,k)=1$. Then
\begin{equation} \label{Dedekind_m_2}
\sum_{a \text{ {\rm (mod $k$)}}} f_1(ah_1)f_2(ah_2) = \frac{(-1)^s}{k}
\sum_{a \text{ {\rm (mod $k$)}}} \widehat{f_1}(ah_2) \widehat{f_2}(ah_1),
\end{equation}
where $s=1$ if $f_1$ or $f_2$ is odd, $s=0$ if $f_1$ or $f_2$ is even.
\end{corollary}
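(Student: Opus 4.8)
The plan is to derive \eqref{Dedekind_m_2} directly from Corollary \ref{cor_m_2_gen}, the only new ingredient being that the discrete Fourier transform preserves parity. First I would record this auxiliary fact: for any $k$-periodic function $f$,
\[
\widehat{f}(-n) = \sum_{a \text{ (mod $k$)}} f(a) e^{2\pi i an/k} = \sum_{a \text{ (mod $k$)}} f(-a) e^{-2\pi i an/k},
\]
where the second equality comes from replacing $a$ with $-a$, a bijection on any complete residue system mod $k$. Hence $\widehat{f}(-n) = \widehat{f}(n)$ if $f$ is even and $\widehat{f}(-n) = -\widehat{f}(n)$ if $f$ is odd; that is, $\widehat{f}$ inherits the parity of $f$.

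Next I would split into cases according to which factor carries the parity hypothesis, taking as starting point the identity \eqref{Dedekind_m_2_general},
\[
\sum_{a \text{ (mod $k$)}} f_1(ah_1)f_2(ah_2) = \frac1{k} \sum_{a \text{ (mod $k$)}} \widehat{f_1}(-ah_2)\widehat{f_2}(ah_1).
\]
If $f_1$ is even or odd, I would apply the parity fact to the first factor, writing $\widehat{f_1}(-ah_2) = (-1)^s \widehat{f_1}(ah_2)$ with $s=0$ or $s=1$ respectively, which yields \eqref{Dedekind_m_2} immediately.

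The only mild twist occurs when it is $f_2$ rather than $f_1$ that is even or odd, since in \eqref{Dedekind_m_2_general} the minus sign sits inside the argument of $\widehat{f_1}$. In this case I would first substitute $a \to -a$ throughout the right-hand sum (again a bijection on the residues mod $k$), rewriting it as $\frac1{k}\sum_{a} \widehat{f_1}(ah_2)\widehat{f_2}(-ah_1)$, and then invoke the parity fact on $\widehat{f_2}(-ah_1) = (-1)^s \widehat{f_2}(ah_1)$, which once more produces \eqref{Dedekind_m_2}. I do not anticipate any genuine obstacle here; the crux is simply recognizing that the $a \to -a$ symmetry of the summation allows one to shift the stray minus sign onto whichever of the two factors is assumed to have a definite parity, after which the DFT-parity fact finishes the argument in both cases.
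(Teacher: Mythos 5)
Your proposal is correct and follows exactly the route the paper intends: the paper states this corollary without a separate proof, but its accompanying remark (``if $f$ is odd (resp.\ even), then $\widehat{f}$ is also odd (resp.\ even)'') together with Corollary \ref{cor_m_2_gen} is precisely your argument. Your case analysis, including the substitution $a\to -a$ when the parity hypothesis falls on $f_2$, correctly fills in the details the paper leaves implicit.
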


Note that if the function $f$ is odd (resp. even), then $\widehat{f}$ is
also odd (resp. even). If one of the functions $f_1, f_2$ is odd and the
other one is even, then both sides of \eqref{Dedekind_m_2} are zero.

In this paper we will use the following DFT pairs of $k$-periodic functions.

\begin{lemma} \label{lemma} {\rm (i)} Let $k\in \N$. The DFT of the $k$-periodic odd function
$f(n)=\left(\left( \frac{n}{k} \right) \right)$ {\rm ($n\in \Z$)} is
\begin{equation*}
\widehat{f}(n)= \begin{cases} \frac{i}{2}\cot \left( \frac{\pi n}{k}\right), & \text{ if } \ k\nmid n, \\ 0, & \text{ if } \ k\mid n.
\end{cases}
\end{equation*}

{\rm (ii)} Let $k\in \N$ and let $\overline{B_r}$ {\rm ($r\in \N$)}
be the Bernoulli functions (cf. Section \ref{subsect_2_1}). The DFT
of the $k$-periodic function
$f(n)=\overline{B}_r\left(\frac{n}{k}\right)$ {\rm ($n\in\Z$)} is
\begin{equation*}
\widehat{f}(n) = \begin{cases} rk^{1-r} \left(\frac{i}{2}\right)^r
\cot^{(r-1)}\left(\frac{\pi n}{k}\right), & \text{ if } \ k\nmid n, \\ B_r
k^{1-r}, & \text{ if } \ k\mid n,
\end{cases}
\end{equation*}
where $B_r$ is the $r$-th Bernoulli number and $\cot^{(m)}$ is the
$m$-th derivative of the cotangent function.

{\rm (iii)} Let $k\in \N$ be even. The DFT of the $k$-periodic odd function $f(n)=(-1)^n \left(\left(\frac{n}{k} \right)\right)$
{\rm ($n\in\Z$)} is
\begin{equation*}
\widehat{f}(n)= \begin{cases} -\frac{i}{2} \tan \left(\frac{\pi n}{k}\right), & \text{ if } \ n\not\equiv \frac{k}{2} \text{ {\rm (mod $k$)}},
\\ 0, & \text{ if } \ n\equiv \frac{k}{2} \text{ {\rm (mod $k$)}}.
\end{cases}
\end{equation*}

{\rm (iv)} Let $k$ be odd and let $n \text{ {\rm (mod $k$)}}
=n-k\lfloor n/k\rfloor$ be the least nonnegative residue of $n$ {\rm
(mod $k$)}. The DFT of the $k$-periodic odd function
\begin{equation*}
f(n)=  \begin{cases}  (-1)^{n \text{ {\rm (mod $k$)}}}, & \text{ if
} \  k\nmid n,
\\ 0, & \text{ if } \ k \mid n
\end{cases}
\end{equation*}
is
\begin{equation*}
\widehat{f}(n)= i\tan \left(\frac{\pi n}{k}\right) \quad (n\in \Z).
\end{equation*}

{\rm (v)} Let $k\in \N$. Let $F(s,x)$, $\zeta(s,x)$ and $\zeta(s)$ be the periodic zeta function, the Hurwitz zeta function, and the
Riemann zeta function, respectively (cf. Section \ref{subsect_3_3}). For $\Re s>1$ the DFT of the $k$ periodic function
$f(n)= F(s,\frac{n}{k})$ {\rm ($n\in\Z$)} is
\begin{equation*}
\widehat{f}(n)=\begin{cases} k^{1-s} \zeta\left(s,\left\{\frac{n}{k}\right\}\right), & \text{ if } \ k\nmid n, \\
k^{1-s}\zeta(s), & \text{ if } \ k\mid n.
\end{cases}
\end{equation*}
\end{lemma}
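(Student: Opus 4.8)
The common engine for all five parts is the orthogonality relation $\sum_{a \,(\mathrm{mod}\ k)} e^{2\pi i a j/k} = k$ if $k\mid j$ and $0$ otherwise. The plan is to write each function $f$ as an (absolutely convergent, or finite) exponential series $\sum_m c_m e^{2\pi i m \cdot /k}$, substitute into the defining sum $\widehat{f}(n)=\sum_{a}f(a)e^{-2\pi i an/k}$, and interchange summations so that orthogonality collapses everything to the single residue class $m\equiv n \,(\mathrm{mod}\ k)$. What survives is then to be recognized as a cotangent (together with its derivatives), a tangent, or a Hurwitz zeta value.

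For parts (i), (iii) and (iv) the series is finite and the computation reduces to summing a geometric progression. In (i), writing $\omega=e^{-2\pi i n/k}$, I would evaluate $\sum_{a=1}^{k-1}\bigl(\tfrac{a}{k}-\tfrac12\bigr)\omega^a$ using $\sum_{a=1}^{k-1}\omega^a=-1$ together with the closed form $\sum_{a=1}^{k-1}a\omega^a=k/(\omega-1)$ valid for $k\nmid n$, and then simplify $\tfrac{1}{\omega-1}+\tfrac12=\tfrac{\omega+1}{2(\omega-1)}=\tfrac{i}{2}\cot(\pi n/k)$ after multiplying numerator and denominator by $e^{\pi i n/k}$; the case $k\mid n$ is immediate since the summand averages to $0$. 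Part (iii) then follows with no new work: since $k$ is even, $(-1)^n=e^{2\pi i (k/2) n/k}$, so $f$ is a modulation of the sawtooth and $\widehat{f}(n)=\widehat{g}(n-\tfrac{k}{2})$ with $g(n)=\bigl(\bigl(\tfrac{n}{k}\bigr)\bigr)$; the shift turns $\cot(\pi n/k-\pi/2)=-\tan(\pi n/k)$ into the claimed formula, while $n\equiv k/2$ lands precisely on a zero of $\widehat{g}$. Part (iv) is again a single geometric sum $\sum_{a=1}^{k-1}(-\omega)^a$; here the oddness of $k$ forces $(-\omega)^k=-1$, so the ratio reduces to $(1+\eta)/(1-\eta)$ with $\eta=-\omega$, which simplifies to $i\tan(\pi n/k)$, and oddness also guarantees that the denominator never vanishes.

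Parts (ii) and (v) use genuinely infinite series. For (v) I would insert the Dirichlet series $F(s,x)=\sum_{m\ge 1}m^{-s}e^{2\pi i mx}$, which for $\RE s>1$ converges absolutely and so justifies the interchange of summations; orthogonality leaves $k\sum_{m\equiv n,\ m\ge 1}m^{-s}$, and reindexing $m=\{n/k\}k+kj$ identifies this with $k^{1-s}\zeta(s,\{n/k\})$ when $k\nmid n$ and with $k^{1-s}\zeta(s)$ when $k\mid n$. For (ii) I would start from the Fourier expansion $\overline{B}_r(x)=-\frac{r!}{(2\pi i)^r}\sum_{m\neq 0}m^{-r}e^{2\pi i mx}$; the same collapse gives $\widehat{f}(n)=-\frac{r!\,k^{1-r}}{(2\pi i)^r}\sum_{j\in\Z}(n/k+j)^{-r}$ for $k\nmid n$. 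I would then invoke the partial fraction expansion $\pi\cot\pi z=\sum_{j\in\Z}(z+j)^{-1}$, differentiate it $r-1$ times to obtain $\sum_{j}(z+j)^{-r}=\frac{(-1)^{r-1}}{(r-1)!}\pi^r\cot^{(r-1)}(\pi z)$, and substitute $z=n/k$.

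The main obstacle is the bookkeeping of constants in (ii): after substitution one must verify that $-\frac{r!}{(r-1)!}\cdot\frac{(-1)^{r-1}\pi^r}{(2\pi i)^r}$ collapses exactly to $r(i/2)^r$, which rests on the identity $(-1)^r i^{-r}=i^r$. The remaining care is the degenerate class $k\mid n$: here one cannot simply drop a term from $\sum_{j}(n/k+j)^{-r}$, so instead I would compute $\widehat{f}(n)=\sum_{a=0}^{k-1}\overline{B}_r(a/k)$ directly from the multiplication (distribution) relation $\sum_{a=0}^{k-1}B_r\!\bigl(\tfrac{a}{k}\bigr)=k^{1-r}B_r$, yielding $k^{1-r}B_r$. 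This last point also explains why part (i) must be listed separately rather than as the $r=1$ instance of (ii): the sawtooth vanishes at integers, whereas $\overline{B}_1$ would contribute $B_1=-\tfrac12$ there, so the two formulas genuinely differ on the class $k\mid n$.
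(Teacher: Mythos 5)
Your computations for parts (i), (iii), (iv), (v), and for part (ii) in the range $r\ge 2$, are correct, and they supply precisely the details the paper omits: the paper offers no proof at all beyond the remark that (i), (iii), (iv), (v) follow ``by easy computations from the definition of the DFT'' and a citation of Beck's Lemma 6 (and Almkvist) for (ii). Your geometric-sum evaluation in (i) (via $\sum_{a=1}^{k-1}a\omega^a=k/(\omega-1)$ for $\omega^k=1$, $\omega\ne1$), the modulation trick $(-1)^a=e^{2\pi ia(k/2)/k}$ reducing (iii) to a shift of (i), the geometric sum with $\eta^k=-1$ in (iv), the absolutely convergent rearrangement into residue classes in (v), and the partial-fraction identity $\sum_{j\in\Z}(z+j)^{-r}=\frac{(-1)^{r-1}}{(r-1)!}\pi^r\cot^{(r-1)}(\pi z)$ together with the constant check $(-1)^r i^{-r}=i^r$ in (ii) are all sound and are the standard route behind the cited references.

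The genuine gap is the case $r=1$ of part (ii). The expansion $\overline{B}_1(x)=-\frac{1}{2\pi i}\sum_{m\ne 0}m^{-1}e^{2\pi imx}$ is only conditionally convergent (so your blanket ``absolutely convergent'' justification fails and the regrouping into residue classes needs symmetric partial sums), and, more seriously, it is false at $x\in\Z$: the series sums to $0$ there, while the paper's convention $\overline{B}_r(x)=B_r(\{x\})$ gives $\overline{B}_1(0)=B_1=-\tfrac12$. Substituting the expansion at the term $a\equiv 0\pmod{k}$ of the DFT sum therefore drops a contribution $-\tfrac12\,e^{0}=-\tfrac12$, and since this term carries the factor $e^{-2\pi i\cdot 0\cdot n/k}=1$, the resulting shift affects \emph{every} frequency $n$, not only the class $k\mid n$; your closing remark that the sawtooth and $\overline{B}_1$ formulas ``genuinely differ on the class $k\mid n$'' mislocates the discrepancy. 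A correct computation yields, for $k\nmid n$, $\widehat{f}(n)=\frac{i}{2}\cot\left(\frac{\pi n}{k}\right)-\frac12$; for instance $k=2$, $r=1$, $n=1$ gives $\widehat{f}(1)=\overline{B}_1(0)=-\tfrac12$, whereas the displayed formula gives $\tfrac{i}{2}\cot(\pi/2)=0$. So at $r=1$, $k\nmid n$ the stated formula cannot be proved under the paper's convention — it must either be restricted to $r\ge 2$ or read with the sawtooth convention $\overline{B}_1=\left(\left(\cdot\right)\right)$ — and this, rather than the $k\mid n$ branch alone (which your distribution-relation argument $\sum_{a=0}^{k-1}B_r(a/k)=k^{1-r}B_r$ handles correctly for all $r$), is the real reason part (i) is stated separately from the $r=1$ instance of (ii).
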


Here (i) and (iv) are well known. They follow, together with (iii) and (v), by easy computations from the definition of the DFT.
For (ii) we refer to \cite[Lemma 6]{Bec2003}. See also \cite[Sect.\ 6]{Alm1998}.


\section{Applications}
\setcounter{theorem}{1}
\setcounter{corollary}{2}

\subsection{Generalized Dedekind sums} \label{subsect_2_1}

\noindent We first derive the following higher dimensional generalization of the identity
\eqref{cot_repres}, first deduced by D.~Zagier \cite[Th.\ p. 157]{Zag1973}, in a slightly different form, by applying some
other arguments.

\begin{theorem} Let $k\in \N$, $m\in \N$ be even and let $h_j\in \Z$, $\gcd(h_j,k)=1$ {\rm ($1\le j\le m$)}. Then
\begin{align} \label{higher_dim_Dedekind}
& \sum_{\substack{a_1,\ldots,a_m \text{ {\rm (mod $k$)}} \\ a_1+\ldots+a_m\equiv 0 \text{ {\rm (mod $k$)}}}}
\left(\left(\frac{a_1h_1}{k}\right)\right)
\cdots \left(\left(\frac{a_m h_m}{k}\right)\right)  \nonumber\\
& \qquad \qquad  =
\frac{(-1)^{m/2}}{2^m k} \sum_{a=1}^{k-1} \cot \left(\frac{\pi
ah_1'}{k}\right) \cdots \cot \left(\frac{\pi ah_m'}{k}\right).
\end{align}
\end{theorem}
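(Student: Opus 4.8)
The plan is to obtain \eqref{higher_dim_Dedekind} as a direct specialization of Theorem \ref{th_main}, taking every function to be the sawtooth function. Concretely, I would set $f_j(n) = \left(\left(\frac{n}{k}\right)\right)$ for each $1 \le j \le m$. With this choice the left-hand side of the identity in Theorem \ref{th_main} is, term for term, exactly the multiple sum on the left of \eqref{higher_dim_Dedekind}, so no work is needed there; the entire content of the argument lies in evaluating the right-hand side $\frac{1}{k}\sum_{a \pmod k} \widehat{f_1}(ah_1')\cdots\widehat{f_m}(ah_m')$.

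For that I would invoke Lemma \ref{lemma}(i), which gives $\widehat{f_j}(n) = \frac{i}{2}\cot\left(\frac{\pi n}{k}\right)$ when $k \nmid n$ and $\widehat{f_j}(n) = 0$ when $k \mid n$. The first point to settle is which residues $a$ contribute. Since $h_j'$ is the inverse of $h_j$ modulo $k$, we have $\gcd(h_j', k) = 1$, whence $k \mid a h_j'$ if and only if $k \mid a$. Consequently, when $a \equiv 0 \pmod k$ every factor $\widehat{f_j}(a h_j')$ vanishes and that term drops out, while for $a \not\equiv 0 \pmod k$ none of the arguments $a h_j'$ is divisible by $k$, so each factor is a genuine cotangent and the product is well defined. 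This reduces the right-hand sum to a sum over $a = 1, \ldots, k-1$.

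It then remains only to extract constants: substituting the cotangent values yields
\begin{equation*}
\frac{1}{k}\sum_{a=1}^{k-1}\prod_{j=1}^m \frac{i}{2}\cot\left(\frac{\pi a h_j'}{k}\right) = \frac{1}{k}\left(\frac{i}{2}\right)^m \sum_{a=1}^{k-1} \cot\left(\frac{\pi a h_1'}{k}\right)\cdots\cot\left(\frac{\pi a h_m'}{k}\right).
\end{equation*}
Here I use the hypothesis that $m$ is even: writing $i^m = (i^2)^{m/2} = (-1)^{m/2}$ gives $\left(\frac{i}{2}\right)^m = \frac{(-1)^{m/2}}{2^m}$, which is precisely the prefactor appearing in \eqref{higher_dim_Dedekind}. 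Matching this against the left-hand side furnished by Theorem \ref{th_main} completes the argument.

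This proof is essentially mechanical once the ingredients are assembled, so I do not anticipate a genuine obstacle; the only points demanding care are the bookkeeping of the summation range—verifying that exactly the $a \equiv 0$ term is annihilated by the vanishing of the DFT, which rests on $\gcd(h_j', k) = 1$—and the observation that the evenness of $m$ is exactly what renders the prefactor $\left(\frac{i}{2}\right)^m = \frac{(-1)^{m/2}}{2^m}$ real. This also clarifies the role of the hypothesis on $m$: under the involution $a_j \mapsto k - a_j$ (which preserves the congruence $a_1 + \cdots + a_m \equiv 0$) each sawtooth factor changes sign, so for odd $m$ both sides vanish identically and the statement carries no information, whereas for even $m$ one obtains the genuine real-valued trigonometric representation above.
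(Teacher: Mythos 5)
Your proof is correct and follows essentially the same route as the paper's: the paper's proof is precisely ``apply Theorem \ref{th_main} with $f_1=\cdots=f_m$ equal to the sawtooth function and use Lemma \ref{lemma}/(i),'' and your careful bookkeeping (the vanishing of the $a\equiv 0$ term via $\gcd(h_j',k)=1$, and $(i/2)^m=(-1)^{m/2}/2^m$ for even $m$) just fills in the details the paper leaves implicit. Your closing remark that both sides vanish for odd $m$ also matches the paper's note following the theorem.
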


\begin{proof} Apply Theorem \ref{th_main} for $f_1=\ldots=f_m=f$, where
$f(n)=\left(\left( \frac{n}{k} \right) \right)$ ($n\in \Z$). Use
Lemma \ref{lemma}/(i).
\end{proof}

Note that if $m$ is odd, then both sides of
\eqref{higher_dim_Dedekind} are zero.

\begin{corollary} {{{{Assume that}}}} $m=2$. Let $k\in \N$, $h_1,h_2\in \Z$, $\gcd(h_1,k)=\gcd(h_2,k)=1$. Then
\begin{equation} \label{cot_repres_homog}
\sum_{a=1}^{k-1}
\left(\left(\frac{ah_1}{k}\right)\right) \left(\left(\frac{ah_2}{k}\right)\right) =
\frac1{4k} \sum_{a=1}^{k-1} \cot \left(\frac{\pi
ah_1}{k}\right) \cot \left(\frac{\pi ah_2}{k}\right).
\end{equation}
\end{corollary}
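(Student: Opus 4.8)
The statement is exactly the case $m=2$ of the preceding theorem, so in principle one could simply specialize \eqref{higher_dim_Dedekind}; the catch is that that identity carries the inverses $h_1',h_2'$ on its right-hand side, whereas \eqref{cot_repres_homog} is stated with $h_1,h_2$ themselves. The cleanest route therefore bypasses \eqref{higher_dim_Dedekind} and instead invokes Corollary~\ref{coroll_2}, whose formula \eqref{Dedekind_m_2} already displays $h_1,h_2$ directly. The plan is to apply \eqref{Dedekind_m_2} to $f_1=f_2=f$, where $f(n)=\left(\left(\frac{n}{k}\right)\right)$. Since the sawtooth function is odd, we are in the case $s=1$, so the prefactor on the right is $(-1)^s/k=-1/k$.

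Next I would substitute the DFT pair from Lemma~\ref{lemma}/(i), namely $\widehat{f}(n)=\frac{i}{2}\cot\left(\frac{\pi n}{k}\right)$ when $k\nmid n$ and $\widehat{f}(n)=0$ when $k\mid n$. Because $\gcd(h_1,k)=\gcd(h_2,k)=1$, the condition $k\mid ah_j$ is equivalent to $k\mid a$, so every term with $k\mid a$ vanishes on the right-hand side and the summation collapses to $a=1,\dots,k-1$; on the left-hand side the same terms vanish because $\left(\left(0\right)\right)=0$. For the surviving terms one has $\widehat{f}(ah_2)\widehat{f}(ah_1)=\left(\frac{i}{2}\right)^2\cot\left(\frac{\pi ah_1}{k}\right)\cot\left(\frac{\pi ah_2}{k}\right)=-\frac14\cot\left(\frac{\pi ah_1}{k}\right)\cot\left(\frac{\pi ah_2}{k}\right)$. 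Combining this with the prefactor gives $\frac{(-1)^s}{k}\cdot\left(-\frac14\right)=\frac{1}{4k}$, which is precisely \eqref{cot_repres_homog}.

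I do not expect any genuine obstacle; the only point requiring care is the bookkeeping of signs, where the factor $(-1)^s=-1$ coming from the oddness of the sawtooth function must cancel the factor $(i/2)^2=-1/4$ to produce the positive constant $1/(4k)$. Should one instead insist on deducing the result from \eqref{higher_dim_Dedekind}, the extra work is precisely to convert the inverses $h_1',h_2'$ back to $h_1,h_2$. This can be done by observing that both the sawtooth sum and the cotangent sum are unchanged under $(h_1,h_2)\mapsto(ch_1,ch_2)$ for any $c$ with $\gcd(c,k)=1$ (substitute $a\mapsto ac$ in each sum), and that the cotangent sum is symmetric in its two arguments; taking $c=h_1h_2$ then sends $(h_1',h_2')$ to $(h_2,h_1)$ and hence to $(h_1,h_2)$, while the oddness of $\left(\left(\cdot\right)\right)$ handles the constraint $a_2\equiv-a_1$ on the left. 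Both derivations yield the same identity, confirming their mutual consistency.
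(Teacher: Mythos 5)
Your proposal is correct, and both of your routes are sound: the sign bookkeeping works out exactly as you say, since $(-1)^s=-1$ from the oddness of the sawtooth function multiplies $\left(\frac{i}{2}\right)^2=-\frac14$ to give $\frac1{4k}$, and the boundary terms are handled correctly because $\left(\left(0\right)\right)=0$ on the left and $\widehat{f}(0)=0$ on the right (Lemma~\ref{lemma}/(i)), so both sums collapse to $a=1,\ldots,k-1$. The paper gives no written proof of this corollary; it is presented as the $m=2$ case of the theorem containing \eqref{higher_dim_Dedekind}, so the paper's implicit argument is precisely your fallback route: specialize, use oddness of $\left(\left(\cdot\right)\right)$ to absorb both the constraint $a_2\equiv -a_1$ and the sign $(-1)^{m/2}=-1$, and remove the inverses $h_1',h_2'$ by the substitution $a\mapsto ah_1h_2$ together with the $k$-periodicity of $n\mapsto\cot(\pi n/k)$. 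Your primary route through Corollary~\ref{coroll_2} is not genuinely different machinery: that corollary is itself deduced from Theorem~\ref{th_main} with $m=2$, and the change of variables $b=ah_1'h_2'$ performed in the proof of Corollary~\ref{cor_m_2_gen} is exactly the inverse-removal step you were trying to avoid, just carried out once and for all at the level of the general Parseval-type identity. What your choice buys is economy: by entering through \eqref{Dedekind_m_2}, where $h_1,h_2$ already appear undashed, the corollary reduces to a one-line substitution of the DFT pair, whereas the direct specialization of \eqref{higher_dim_Dedekind} keeps the statement visibly an instance of the higher-dimensional Zagier identity at the cost of redoing the substitution by hand. Both derivations agree, as you verified.
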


Identity \eqref{cot_repres_homog} is the homogeneous version of
\eqref{cot_repres} and is equivalent to \eqref{cot_repres}.

Now consider the Bernoulli polynomials $B_r(x)$ ($r\ge 0$), defined
by
\begin{equation*}
\frac{te^{\, xt}}{e^{\, t}-1}= \sum_{r=0}^{\infty} \frac{B_r(x)}{r!}t^{\,r}.
\end{equation*}

Here $$B_1(x)=x-1/2,\ B_2(x)=x^2-x+1/6,\ B_3(x)=x^3-3x^2/2+x/2\ \text{and}\ B_r:=B_r(0)$$ are the Bernoulli numbers.
The Bernoulli functions $x\mapsto \overline{B}_r(x)$ are given by
\begin{equation*}
\overline{B}_r(x)=B_r(\{x\}) \quad (x\in {\Bbb R}).
\end{equation*}

\noindent Note that $$\overline{B}_1(x)=((x))\ \text{for}\ x\notin \Z,$$ but $$\overline{B}_1(x)=-1/2 \ne 0=((x))\ \text{for}\ x\in \Z.$$

For $r_1,\ldots,r_m \in \N$, $h_1,\ldots,h_m \in \Z$  we define the
higher dimensional Dedekind--Bernoulli sum by
\begin{equation} \label{Bern_gen_def}
s_{r_1,\ldots,r_m}(h_1,\ldots,h_m;k):=
\sum_{\substack{a_1,\ldots,a_m \text{ {\rm (mod $k$)}} \\
a_1+\ldots+a_m\equiv 0 \text{ {\rm (mod $k$)}}}}
\overline{B}_{r_1}\left(\frac{a_1h_1}{k}\right) \cdots
\overline{B}_{r_m}\left(\frac{a_mh_m}{k}\right).
\end{equation}
In the case {{{{when}}}} $m=2$ and by {{{{setting}}}} $h_2:=-h_2$ we {{{{obtain}}}} the sum
\begin{equation} \label{Ded_Bern_2}
s_{r_1,r_2}(h_1,-h_2;k):=
\sum_{a \text{ {\rm (mod $k$)}}} \overline{B}_{r_1}\left(\frac{ah_1}{k}\right)
\overline{B}_{r_2}\left(\frac{ah_2}{k}\right),
\end{equation}
first investigated by L. Carlitz \cite{Car1953} and M. Mikol\'as
\cite{Mik1957}. See the paper of M.~Beck \cite{Bec2003} for further
historical remarks.

\begin{theorem} \label{Th_Bern_gen} Let $k,m, r_j \in \N$ be such that
$A:=r_1+\ldots +r_m$ is even and $h_j\in \Z$, $\gcd(h_j,k)=1$
{\rm ($1\le j\le m$)}. Then
\begin{align} \nonumber
& \qquad \qquad \qquad s_{r_1,\ldots,r_m}(h_1,\ldots,h_m;k) =
\frac{B_{r_1}\cdots
B_{r_m}}{k^{A-m+1}} \\
\label{Bern_gen}
& + \frac{(-1)^{A/2} r_1\cdots r_m}{2^Ak^{A-m+1}}
\sum_{a=1}^{k-1} \cot^{(r_1-1)}\left(\frac{\pi
ah_1'}{k}\right)\cdots \cot^{(r_m-1)}\left(\frac{\pi
ah_m'}{k}\right),
\end{align}
\end{theorem}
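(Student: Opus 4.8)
The plan is to apply Theorem~\ref{th_main} directly, taking $f_1,\ldots,f_m$ to be the $k$-periodic functions $f_j(n)=\overline{B}_{r_j}\!\left(\frac{n}{k}\right)$ ($n\in\Z$), exactly as in the proof of \eqref{higher_dim_Dedekind} but with the sawtooth $((n/k))$ replaced by the higher Bernoulli functions. By the definition \eqref{Bern_gen_def}, the left-hand side of \eqref{Bern_gen} is precisely the sum appearing on the left in Theorem~\ref{th_main}, so that theorem rewrites it as $\frac1k\sum_{a\pmod k}\widehat{f_1}(ah_1')\cdots\widehat{f_m}(ah_m')$. The values $\widehat{f_j}$ are then supplied by Lemma~\ref{lemma}/(ii), which is a two-case formula according to whether $k\nmid n$ or $k\mid n$.

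The observation that organizes this sum is that, since $\gcd(h_j',k)=1$, the congruence $ah_j'\equiv 0\pmod k$ holds exactly when $a\equiv 0\pmod k$, and this occurs simultaneously for every $j$. Hence I would split the sum over $a\pmod k$ into the single term $a\equiv 0$ and the range $a=1,\ldots,k-1$. In the term $a\equiv 0$ each factor equals $\widehat{f_j}(0)=B_{r_j}k^{1-r_j}$; multiplying these together and including the outer $\frac1k$ yields $\frac{B_{r_1}\cdots B_{r_m}}{k^{A-m+1}}$, the first summand of \eqref{Bern_gen}. For each $a\in\{1,\ldots,k-1\}$ every factor instead takes the cotangent-derivative value $r_jk^{1-r_j}(i/2)^{r_j}\cot^{(r_j-1)}(\pi ah_j'/k)$, producing the trigonometric sum.

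What remains is to collect the constants in this second part: the powers of $k$ multiply to $k^{m-A}$, the factors $r_j$ give $r_1\cdots r_m$, and the imaginary factors combine to $(i/2)^A=i^A/2^A$. Here the hypothesis that $A=r_1+\cdots+r_m$ is even is exactly what is needed, since it forces $i^A=(-1)^{A/2}$, so the trigonometric sum acquires a real coefficient; folding in the outer $\frac1k$ turns this coefficient into $\frac{(-1)^{A/2}r_1\cdots r_m}{2^Ak^{A-m+1}}$, as claimed. The argument is a direct application of machinery already in place, so there is no genuine obstacle; the only points demanding care are the bookkeeping of the exponent of $k$—tracking each $k^{1-r_j}$ against the prefactor $\frac1k$ to reach $k^{A-m+1}$ in the denominator—and the parity check on $A$ that makes $i^A$ real.
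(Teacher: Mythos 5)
Your proposal is correct and follows exactly the paper's own (very terse) proof: apply Theorem~\ref{th_main} to $f_j(n)=\overline{B}_{r_j}\left(\frac{n}{k}\right)$ and evaluate the transforms via Lemma~\ref{lemma}/(ii). The details you supply---splitting off the $a\equiv 0$ term (legitimate since $\gcd(h_j',k)=1$), tracking the powers $k^{1-r_j}$ against the outer $\frac1k$, and using the parity of $A$ to write $i^A=(-1)^{A/2}$---are precisely the bookkeeping the paper leaves implicit, and they check out.
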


Note that if $A$ is odd, then the sum in \eqref{Bern_gen} vanishes.
If $A$ is odd and there is at least one $j$ such that $r_j\ge 3$,
then $B_{r_j}=0$ and the sum \eqref{Bern_gen_def} vanishes as well.

\begin{proof} Apply Theorem \ref{th_main} and Lemma \ref{lemma}/(ii) to the functions
$$ f_j(n)=\overline{B}_{r_j}\left(\frac{n}{k}\right) \quad (1\le j\le m).$$
\end{proof}

\begin{corollary} {\rm (\cite[Cor.\ 7]{Bec2003})} Let $k, r_1,r_2 \in \N$, $h_1,h_2 \in \Z$ be such that
$r_1+r_2$ is even and $\gcd(h_1,k)=\gcd(h_2,k)=1$. Then
\begin{equation*}
\sum_{a \text{ {\rm (mod $k$)}}}
\overline{B}_{r_1}\left(\frac{ah_1}{k}\right)
\overline{B}_{r_2}\left(\frac{ah_2}{k}\right)
\end{equation*}
\begin{equation*}
=\frac{B_{r_1}B_{r_2}}{k^{r_1+r_2-1}} + \frac{(-1)^{(r_1-r_2)/2}
r_1r_2}{2^{r_1+r_2}k^{r_1+r_2-1}} \sum_{a=1}^{k-1}
\cot^{(r_1-1)}\left(\frac{\pi ah_1}{k}\right)
\cot^{(r_2-1)}\left(\frac{\pi ah_2}{k}\right).
\end{equation*}
\end{corollary}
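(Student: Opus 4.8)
The plan is to read off the identity as the $m=2$ instance of Theorem \ref{Th_Bern_gen}, but I would actually run the computation through Corollary \ref{cor_m_2_gen}, since that form already displays $h_1,h_2$ rather than their inverses and so postpones the only genuinely delicate bookkeeping to the very end. Concretely, I would set $f_1(n)=\overline{B}_{r_1}(n/k)$ and $f_2(n)=\overline{B}_{r_2}(n/k)$, so that the left-hand side $\sum_{a}\overline{B}_{r_1}(ah_1/k)\,\overline{B}_{r_2}(ah_2/k)$ is exactly $\sum_a f_1(ah_1)f_2(ah_2)$, the left-hand side of \eqref{Dedekind_m_2_general}. Corollary \ref{cor_m_2_gen} then rewrites it as $\tfrac1k\sum_a \widehat{f_1}(-ah_2)\widehat{f_2}(ah_1)$, and the whole proof reduces to inserting the DFT values supplied by Lemma \ref{lemma}/(ii).

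Next I would split the sum over $a$ according to whether $k\mid a$ or not; since $\gcd(h_1,k)=\gcd(h_2,k)=1$ we have $k\mid ah_j$ if and only if $k\mid a$, so the two cases are governed by the single residue $a\equiv 0$. That term contributes $\tfrac1k\widehat{f_1}(0)\widehat{f_2}(0)=\tfrac1k\bigl(B_{r_1}k^{1-r_1}\bigr)\bigl(B_{r_2}k^{1-r_2}\bigr)=B_{r_1}B_{r_2}/k^{r_1+r_2-1}$, the first summand on the right. For $a=1,\dots,k-1$ I would substitute $\widehat{f_j}(n)=r_jk^{1-r_j}(i/2)^{r_j}\cot^{(r_j-1)}(\pi n/k)$ and pull out the scalar $r_1r_2k^{1-r_1-r_2}(i/2)^{r_1+r_2}$.

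The prefactor sign is assembled from two independent pieces. Because $A:=r_1+r_2$ is even, $(i/2)^{A}=i^{A}/2^{A}=(-1)^{A/2}/2^{A}$. Because $\cot$ is odd, its $(r-1)$-st derivative obeys $\cot^{(r-1)}(-x)=(-1)^{r}\cot^{(r-1)}(x)$; applied to the argument $-ah_2$ coming from $\widehat{f_1}(-ah_2)$ this supplies a further factor $(-1)^{r_1}$. Multiplying, $(-1)^{A/2}(-1)^{r_1}=(-1)^{(r_1+r_2)/2+r_1}$, and since this exponent differs from $(r_1-r_2)/2$ by the even integer $A$, it equals $(-1)^{(r_1-r_2)/2}$, which is precisely the stated coefficient $(-1)^{(r_1-r_2)/2}r_1r_2/(2^{r_1+r_2}k^{r_1+r_2-1})$.

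The step I expect to require the most care is matching the \emph{arguments} of the two cotangent factors to the displayed form. Corollary \ref{cor_m_2_gen} pairs $\widehat{f_1}$ (which carries the derivative order $r_1-1$) with $-ah_2$ and $\widehat{f_2}$ (order $r_2-1$) with $ah_1$, so the natural output is the ``crossed'' sum $\sum_{a=1}^{k-1}\cot^{(r_1-1)}(\pi ah_2/k)\cot^{(r_2-1)}(\pi ah_1/k)$; routing the argument through Theorem \ref{Th_Bern_gen} instead produces the ``inverse-matched'' sum $\sum_{a=1}^{k-1}\cot^{(r_1-1)}(\pi ah_1'/k)\cot^{(r_2-1)}(\pi ah_2'/k)$. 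The change of variables $a\mapsto ah_1h_2$, a bijection of the nonzero residues since $\gcd(h_1h_2,k)=1$, carries the inverse-matched sum to the crossed sum, confirming that the two routes agree. I would record the identity in this crossed (equivalently inverse-matched) form and be careful here: replacing $h_j'$ by $h_j$ factor-by-factor is not a single legitimate substitution, and only when $r_1=r_2$ do the crossed and matched arrangements coincide, so verifying that the final arguments are oriented correctly is the crux of the bookkeeping.
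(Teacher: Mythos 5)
Your argument is correct and is essentially the paper's own: the paper obtains this corollary by specializing Theorem \ref{Th_Bern_gen} (which is just Theorem \ref{th_main} combined with Lemma \ref{lemma}/(ii)) to $m=2$ with $h_2:=-h_2$, and your route through Corollary \ref{cor_m_2_gen} is the same computation with the inversion bookkeeping postponed; your handling of the zero residue (giving $B_{r_1}B_{r_2}/k^{r_1+r_2-1}$), of $(i/2)^{A}=(-1)^{A/2}/2^{A}$ for $A=r_1+r_2$ even, and of the parity $\cot^{(r-1)}(-x)=(-1)^{r}\cot^{(r-1)}(x)$ all check out. More importantly, the caution in your final paragraph is not just prudence --- it detects an actual misprint in the statement as displayed. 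The DFT computation produces the crossed sum $\sum_{a=1}^{k-1}\cot^{(r_1-1)}\left(\frac{\pi ah_2}{k}\right)\cot^{(r_2-1)}\left(\frac{\pi ah_1}{k}\right)$, which by the substitution $a\mapsto ah_1h_2$ equals the inverse-matched sum of Theorem \ref{Th_Bern_gen} with arguments $h_1',h_2'$, but it is \emph{not} in general equal to the matched sum with $h_1,h_2$ printed above. A concrete check: take $k=5$, $r_1=1$, $r_2=3$, $h_1=1$, $h_2=2$. The left-hand side equals $-3/625$, and since $B_3=0$ the first right-hand term vanishes; using $\cot\frac{\pi}{5}\cot\frac{2\pi}{5}=5^{-1/2}$ and $\cot^2\frac{\pi}{5}-\cot^2\frac{2\pi}{5}=4\cdot 5^{-1/2}$, the crossed sum equals $16/5$, and the coefficient is $(-1)^{(1-3)/2}\cdot 3/(2^4\cdot 5^3)=-3/2000$, giving $-3/625$ as required, whereas the matched sum equals $-16/5$ and yields $+3/625$, contradicting the left-hand side. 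So your crossed (equivalently inverse-matched) form is the correct statement of the corollary, and your observation that factor-by-factor replacement of $h_j'$ by $h_j$ is illegitimate is exactly right; the printed arrangement is harmless only in symmetric situations such as $r_1=r_2$ (which covers the classical case \eqref{cot_repres_homog}) or $h_1\equiv\pm h_2$ (mod $k$).
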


\subsection{Generalized Hardy sums}

The Hardy sums (known also as Hardy--Berndt sums)
are defined for $h\in \Z$, $k\in \N$ as follows.
\begin{align*}
S(h,k) & := \sum_{a \text{ {\rm (mod $k$)}}} (-1)^{a+1+\lfloor ah/k\rfloor}, \\
s_1(h,k) & := \sum_{a \text{ {\rm (mod $k$)}}} (-1)^{\lfloor ah/k\rfloor} \left(\left(\frac{a}{k}\right)\right), \\
s_2(h,k) & := \sum_{a \text{ {\rm (mod $k$)}}} (-1)^a \left(\left(\frac{a}{k}\right)\right) \left(\left(\frac{ah}{k}\right)\right), \\
s_3(h,k) & := \sum_{a \text{ {\rm (mod $k$)}}} (-1)^a \left(\left(\frac{ah}{k}\right)\right), \\
s_4(h,k) & := \sum_{a \text{ {\rm (mod $k$)}}} (-1)^{\lfloor ah/k\rfloor}, \\
s_5(h,k) & :=\sum_{a \text{ {\rm (mod $k$)}}} (-1)^{a+\lfloor ah/k\rfloor} \left(\left(\frac{a}{k}\right)\right).
\end{align*}

B.~C.~Berndt and L.~A.~Goldberg \cite{BerGol1984} derived finite and infinite series representations
for the above sums. These identities were {{{{also}}}} obtained {{{{later}}}} by R.~Sitaramachandrarao \cite{Sit1987} by using some different
arguments. {{{{One could see}}}} \cite{Bec2003,BerGol1984,Die1984,Sit1987} for the history {{{{of these sums as well as}}}} for further results
on the Hardy sums, including reciprocity formulas.

We define the following generalization of $s_2(h,k)$:
\begin{equation*}
A(h_1,\ldots,h_m;k):= \sum_{\substack{a_1,\ldots,a_m \text{ {\rm (mod $k$)}} \\ a_1+\ldots+a_m\equiv 0 \text{ {\rm (mod $k$)}}}}
(-1)^{a_1} \left(\left(\frac{a_1h_1}{k}\right)\right)
\cdots \left(\left(\frac{a_m h_m}{k}\right)\right).
\end{equation*}

\begin{theorem} Let $k,m\in \N$ be even, $h_1,\ldots,h_m\in \Z$, $h_1$ odd, $\gcd(h_j,k)=1$ {\rm ($1\le j\le m$)}.
Then
\begin{equation*}
A(h_1,\ldots,h_m;k)= \frac{(-1)^{m/2-1}}{2^m k} \sum_{\substack{1\le a\le k-1\\ a\ne k/2}} \tan \left(\frac{\pi ah_1'}{k}\right)  \cot \left(\frac{\pi
ah_2'}{k}\right) \cdots \cot \left(\frac{\pi ah_m'}{k}\right).
\end{equation*}
\end{theorem}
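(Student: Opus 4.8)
The plan is to recognize $A(h_1,\ldots,h_m;k)$ as an instance of the left-hand side of Theorem~\ref{th_main}, applied to a carefully chosen family of $k$-periodic functions, and then to insert the appropriate DFT pairs from Lemma~\ref{lemma}. Concretely, I would set $f_1(n)=(-1)^n\left(\left(\frac{n}{k}\right)\right)$ and $f_j(n)=\left(\left(\frac{n}{k}\right)\right)$ for $2\le j\le m$; since $k$ is even, $f_1$ is genuinely $k$-periodic (both $(-1)^n$ and the sawtooth have the right periodicity), and all of these functions are odd. The one point that needs care before invoking Theorem~\ref{th_main} is that the summand of $A$ carries the factor $(-1)^{a_1}$ rather than $(-1)^{a_1h_1}$. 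Here the hypothesis that $h_1$ is odd is exactly what is needed: $(-1)^{a_1h_1}=(-1)^{a_1}$ for all $a_1$, so that $f_1(a_1h_1)=(-1)^{a_1}\left(\left(\frac{a_1h_1}{k}\right)\right)$ and hence $A(h_1,\ldots,h_m;k)=\sum_{a_1+\cdots+a_m\equiv 0}f_1(a_1h_1)\cdots f_m(a_mh_m)$.

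With this reformulation in hand, Theorem~\ref{th_main} rewrites $A$ as $\frac1{k}\sum_{a\,(\mathrm{mod}\,k)}\widehat{f_1}(ah_1')\cdots\widehat{f_m}(ah_m')$. I would then substitute $\widehat{f_1}$ from Lemma~\ref{lemma}/(iii) (legitimate precisely because $k$ is even) and $\widehat{f_j}$ from Lemma~\ref{lemma}/(i). This produces one factor $-\frac{i}{2}\tan(\pi ah_1'/k)$ together with $m-1$ factors $\frac{i}{2}\cot(\pi ah_j'/k)$. The scalar prefactor is therefore $\left(-\frac{i}{2}\right)\left(\frac{i}{2}\right)^{m-1}=-\frac{i^m}{2^m}$, and since $m$ is even, $i^m=(-1)^{m/2}$, which gives $\frac{(-1)^{m/2-1}}{2^m}$, the constant claimed in the statement.

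The remaining task, and the step I expect to require the most attention, is to pin down the range of summation. A term in $\frac1{k}\sum_a$ vanishes exactly when one of its DFT factors is zero. For $j\ge 2$, Lemma~\ref{lemma}/(i) gives $\widehat{f_j}(ah_j')=0$ iff $k\mid ah_j'$, i.e. (since $\gcd(h_j',k)=1$) iff $a\equiv 0\ (\mathrm{mod}\ k)$. For $j=1$, Lemma~\ref{lemma}/(iii) gives $\widehat{f_1}(ah_1')=0$ iff $ah_1'\equiv k/2\ (\mathrm{mod}\ k)$; multiplying by $h_1$ and using once more that $h_1$ is odd (so that $(k/2)h_1\equiv k/2$) shows this is equivalent to $a\equiv k/2\ (\mathrm{mod}\ k)$. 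Hence, running $a$ over the complete residue system $1\le a\le k$, exactly the indices $a\equiv 0$ and $a\equiv k/2$ drop out, leaving the sum over $1\le a\le k-1$ with $a\ne k/2$. Combining the prefactor, this surviving range, and the trigonometric factors yields the asserted identity. For completeness I would also remark that the substitution $a_j\mapsto -a_j$ forces both sides to vanish when $m$ is odd, which is why only even $m$ is treated.
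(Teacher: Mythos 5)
Your proposal is correct and follows essentially the same route as the paper: the paper's proof likewise takes $f_1(n)=(-1)^n\left(\left(\frac{n}{k}\right)\right)$ and $f_j(n)=\left(\left(\frac{n}{k}\right)\right)$ for $2\le j\le m$ and invokes Theorem \ref{th_main} together with Lemma \ref{lemma}, parts (i) and (iii) (the paper's citation of (iv) is evidently a misprint for (iii), since $k$ is even here). Your write-up merely fills in the details the paper leaves implicit --- using $h_1$ odd to replace $(-1)^{a_1h_1}$ by $(-1)^{a_1}$, computing the constant $-i^m/2^m=(-1)^{m/2-1}/2^m$, and identifying the excluded residues $a\equiv 0$ and $a\equiv k/2 \pmod{k}$ --- all of which you do correctly.
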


\begin{proof} Let $f_1(n)=(-1)^n \left(\left(\frac{n}{k} \right)\right)$ and
$f_j(n)=\left(\left(\frac{n}{k} \right)\right)$ {\rm ($2\le j\le
m$)}. Apply Theorem \ref{th_main} and Lemma \ref{lemma}/(i),(iv).
\end{proof}

\begin{corollary} Assume that $m=2$. Let $k\in \N$ be even, $h_1,h_2\in \Z$, $h_1$ odd, and $\gcd(h_1,k)=\gcd(h_2,k)=1$.
Then
\begin{equation*}
\sum_{a=1}^{k-1} (-1)^a \left(\left(\frac{ah_1}{k} \right)\right) \left(\left(\frac{ah_2}{k} \right)\right)
= - \frac1{4k} \sum_{\substack{1\le a\le k-1\\ a\ne k/2}} \tan \left(\frac{\pi ah_2}{k}\right)
\cot \left(\frac{\pi ah_1}{k}\right).
\end{equation*}
\end{corollary}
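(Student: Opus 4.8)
The plan is to obtain this corollary as the $m=2$ specialization of the preceding theorem on $A(h_1,\ldots,h_m;k)$, followed by a reindexing that replaces the inverses $h_1',h_2'$ appearing there by $h_1,h_2$ themselves. First I would set $m=2$ in the theorem: since $(-1)^{m/2-1}=(-1)^0=1$ and $2^m=4$, it reads
\[
A(h_1,h_2;k)=\frac{1}{4k}\sum_{\substack{1\le a\le k-1\\ a\ne k/2}}\tan\left(\frac{\pi a h_1'}{k}\right)\cot\left(\frac{\pi a h_2'}{k}\right).
\]

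Next I would unfold the definition of $A(h_1,h_2;k)$. The constraint $a_1+a_2\equiv 0$ lets me put $a_2\equiv -a_1$, so with $a:=a_1$ the defining sum becomes $\sum_{a \text{ (mod $k$)}}(-1)^a\left(\left(\frac{ah_1}{k}\right)\right)\left(\left(\frac{-ah_2}{k}\right)\right)$. Using that $((\cdot))$ is odd, the factor $\left(\left(\frac{-ah_2}{k}\right)\right)$ equals $-\left(\left(\frac{ah_2}{k}\right)\right)$, and since the $a\equiv 0$ term contributes $((0))=0$, I may restrict to $1\le a\le k-1$. This yields
\[
A(h_1,h_2;k)=-\sum_{a=1}^{k-1}(-1)^a\left(\left(\frac{ah_1}{k}\right)\right)\left(\left(\frac{ah_2}{k}\right)\right),
\]
so that the left-hand side of the corollary equals $-A(h_1,h_2;k)$.

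The main step is to eliminate $h_1',h_2'$ from the trigonometric sum by substituting $a\mapsto ah_1h_2$, which is a bijection on the residues mod $k$ because $\gcd(h_1h_2,k)=1$. Under this substitution the argument $ah_1'$ becomes $ah_1h_2h_1'\equiv ah_2$ and $ah_2'$ becomes $ah_1h_2h_2'\equiv ah_1$ (mod $k$); since $\tan$ and $\cot$ have period $\pi$, the summand $\tan\left(\frac{\pi ah_1'}{k}\right)\cot\left(\frac{\pi ah_2'}{k}\right)$ is carried to $\tan\left(\frac{\pi ah_2}{k}\right)\cot\left(\frac{\pi ah_1}{k}\right)$. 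The only point requiring care — and the one I expect to be the real (if modest) obstacle — is the excluded residue: I must verify that $a=k/2$ is mapped to $a=k/2$, so that the bijection restricts correctly to $\{1,\dots,k-1\}\setminus\{k/2\}$. This is exactly where the hypotheses are used: because $k$ is even and $\gcd(h_1,k)=\gcd(h_2,k)=1$, both $h_1$ and $h_2$, hence their inverses, are odd, and for any odd integer $u$ one has $(k/2)u\equiv k/2$ (mod $k$); thus the forbidden index is preserved.

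Assembling the three steps gives
\[
\sum_{a=1}^{k-1}(-1)^a\left(\left(\frac{ah_1}{k}\right)\right)\left(\left(\frac{ah_2}{k}\right)\right)=-A(h_1,h_2;k)=-\frac1{4k}\sum_{\substack{1\le a\le k-1\\ a\ne k/2}}\tan\left(\frac{\pi ah_2}{k}\right)\cot\left(\frac{\pi ah_1}{k}\right),
\]
which is the asserted identity. I note as a consistency check that in the symmetric Dedekind case \eqref{cot_repres_homog} the same substitution merely swaps two cotangents and is therefore invisible, whereas here the swap genuinely interchanges the roles of $\tan$ and $\cot$, explaining why the final formula pairs $\tan$ with $h_2$ and $\cot$ with $h_1$.
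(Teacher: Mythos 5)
Your proof is correct and is essentially the paper's intended derivation: the paper states this corollary as an immediate specialization of the preceding theorem on $A(h_1,\ldots,h_m;k)$ at $m=2$, with the passage from $h_1',h_2'$ to $h_1,h_2$ effected by exactly the reindexing $a\mapsto ah_1h_2$ that you carry out (the same device used in the paper's proof of Corollary \ref{cor_m_2_gen}). Your explicit check that the excluded residue $a=k/2$ is fixed by this bijection --- using that $k$ even and $\gcd(h_1h_2,k)=1$ force $h_1h_2$ odd --- correctly fills in the one detail the paper leaves implicit.
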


In the case $m=2$, $h_1=1$, $h_2=h$ we obtain the following corollary, cf. \cite[Eq.\ (14)]{BerGol1984}, \cite[Eq.\ (7.3)]{Sit1987}.

\begin{corollary} If $k\in \N$ is even, $h\in \Z$, $\gcd(h,k)=1$, then
\begin{equation*}
s_2(h,k)= - \frac1{4k} \sum_{\substack{1\le a\le k-1\\ a\ne k/2}} \tan \left(\frac{\pi ah}{k}\right)
\cot \left(\frac{\pi a}{k}\right).
\end{equation*}
\end{corollary}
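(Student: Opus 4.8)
The plan is to obtain this statement as the immediate specialization $m=2$, $h_1=1$, $h_2=h$ of the preceding corollary, so the entire argument reduces to checking that the hypotheses hold and then matching the two sides. First I would verify the hypotheses of that corollary with these values: $k$ is even by assumption; $h_1=1$ is odd, as required; and the coprimality conditions are met, since $\gcd(1,k)=1$ holds trivially and $\gcd(h,k)=1$ is given. Hence the preceding corollary applies verbatim.

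Next I would identify the left-hand side. Substituting $h_1=1$ and $h_2=h$, the left side of the preceding corollary reads $\sum_{a=1}^{k-1} (-1)^a \left(\left(\frac{a}{k}\right)\right) \left(\left(\frac{ah}{k}\right)\right)$. Comparing this with the definition $s_2(h,k) = \sum_{a \text{ (mod $k$)}} (-1)^a \left(\left(\frac{a}{k}\right)\right)\left(\left(\frac{ah}{k}\right)\right)$, the only apparent discrepancy is the summation range: the definition runs over a complete residue system modulo $k$, whereas the corollary sums from $1$ to $k-1$. These agree because the summand at $a\equiv 0$ carries the factor $\left(\left(\frac{0}{k}\right)\right)=0$ and therefore vanishes. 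Consequently the left side equals $s_2(h,k)$.

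Finally I would match the right-hand side. With $h_1=1$ the cotangent becomes $\cot\left(\frac{\pi a}{k}\right)$, and with $h_2=h$ the tangent becomes $\tan\left(\frac{\pi ah}{k}\right)$, so the right side of the preceding corollary is exactly $-\frac{1}{4k}\sum_{\substack{1\le a\le k-1\\ a\ne k/2}} \tan\left(\frac{\pi ah}{k}\right)\cot\left(\frac{\pi a}{k}\right)$, which is the asserted formula. I do not expect any genuine obstacle here; the single point requiring a moment's care is confirming that the $a\equiv 0$ summand drops out, so that the residue-system sum in the definition of $s_2(h,k)$ coincides with the explicit finite sum delivered by the corollary.
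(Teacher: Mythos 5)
Your proposal is correct and coincides with the paper's own derivation: the paper obtains this corollary precisely as the specialization $m=2$, $h_1=1$, $h_2=h$ of the preceding corollary, with the hypotheses ($k$ even, $h_1=1$ odd, coprimality) satisfied exactly as you check. Your additional observation that the $a\equiv 0$ term drops out because $\left(\left(0\right)\right)=0$, reconciling the residue-system sum defining $s_2(h,k)$ with the sum over $1\le a\le k-1$, is the only point of care and you handle it correctly.
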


Next, we define the following common generalization of $s_1(h,k)$, $s_3(h,k)$ and $s_5(h,k)$, as follows.
\begin{equation*}
B(h_1,\ldots,h_m;k):= \sum_{\substack{a_1,\ldots,a_m \text{ {\rm (mod $k$)}} \\ a_1\not\equiv 0 \text{ {\rm (mod k)}}\\
a_1+\ldots+a_m\equiv 0 \text{ {\rm (mod $k$)}}}}
(-1)^{a_1h_1+k\lfloor a_1h_1/k \rfloor} \left(\left(\frac{a_2h_2}{k}\right)\right)
\cdots \left(\left(\frac{a_m h_m}{k}\right)\right).
\end{equation*}

\begin{theorem} Let $k\in \N$ be odd, $m\in \N$ be even, $h_j\in \Z$, $\gcd(h_j,k)=1$
{\rm ($1\le j\le m$)}. Then
\begin{equation*}
B(h_1,\ldots,h_m;k)= \frac{(-1)^{m/2}}{2^{m-1} k} \sum_{a=1}^{k-1} \tan \left(\frac{\pi ah_1'}{k}\right)  \cot \left(\frac{\pi
ah_2'}{k}\right) \cdots \cot \left(\frac{\pi ah_m'}{k}\right).
\end{equation*}
\end{theorem}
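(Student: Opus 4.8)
The plan is to apply Theorem \ref{th_main} once more, choosing the first function from Lemma \ref{lemma}/(iv) and the remaining $m-1$ functions from Lemma \ref{lemma}/(i). Specifically, I would take
\[
f_1(n)=\begin{cases} (-1)^{\,n\bmod k}, & k\nmid n,\\ 0, & k\mid n,\end{cases}
\]
and $f_j(n)=\left(\left(\frac{n}{k}\right)\right)$ for $2\le j\le m$, all regarded as $k$-periodic functions. Their DFTs are exactly the ones recorded in Lemma \ref{lemma}, namely $\widehat{f_1}(n)=i\tan(\pi n/k)$ and $\widehat{f_j}(n)=\frac{i}{2}\cot(\pi n/k)$ for $k\nmid n$, with $\widehat{f_j}(n)=0$ for $k\mid n$.

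The only step that is not purely formal is checking that the sign weight in the definition of $B$ equals $f_1(a_1h_1)$. Writing $b=a_1h_1$ and using $b=(b\bmod k)+k\lfloor b/k\rfloor$, one finds $b+k\lfloor b/k\rfloor=(b\bmod k)+2k\lfloor b/k\rfloor$, so that $(-1)^{\,b+k\lfloor b/k\rfloor}=(-1)^{\,b\bmod k}$. Since $\gcd(h_1,k)=1$, the constraint $a_1\not\equiv 0 \pmod{k}$ is equivalent to $a_1h_1\not\equiv 0 \pmod{k}$, i.e.\ to $f_1(a_1h_1)\ne 0$; hence that restriction is automatically enforced by the vanishing of $f_1$, and $B(h_1,\ldots,h_m;k)$ is precisely the left-hand side of Theorem \ref{th_main} for this choice of functions.

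Applying Theorem \ref{th_main} and substituting the DFTs, I would obtain
\[
B(h_1,\ldots,h_m;k)=\frac1k\sum_{a=1}^{k-1} i\tan\left(\frac{\pi ah_1'}{k}\right)\prod_{j=2}^{m}\frac{i}{2}\cot\left(\frac{\pi ah_j'}{k}\right),
\]
where the term $a\equiv 0 \pmod{k}$ drops out because the factors vanish there. It then remains to collect the constant $i\cdot(i/2)^{m-1}=i^m/2^{m-1}$ and to use $i^m=(-1)^{m/2}$, valid since $m$ is even, which produces the asserted prefactor $(-1)^{m/2}/(2^{m-1}k)$.

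There is no serious obstacle here; the computation runs entirely parallel to the proof of the theorem for $A(h_1,\ldots,h_m;k)$. I would only emphasize where the oddness of $k$ enters. It is not needed for the sign reduction above, which holds for every $k$; rather, it is exactly the hypothesis that makes $f_1$ an \emph{odd} function with tangent DFT, as asserted in Lemma \ref{lemma}/(iv). For even $k$ the same $f_1$ would instead be even, its DFT would no longer be the tangent, and the representation would change accordingly.
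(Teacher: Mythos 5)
Your proposal is correct and coincides with the paper's own proof: the authors likewise apply Theorem \ref{th_main} to $f_1$ from Lemma \ref{lemma}/(iv) and $f_j(n)=\left(\left(\frac{n}{k}\right)\right)$ for $2\le j\le m$, with the constant collected exactly as you do. Your verification that $(-1)^{a_1h_1+k\lfloor a_1h_1/k\rfloor}=(-1)^{a_1h_1 \bmod k}$ and that the restriction $a_1\not\equiv 0 \pmod{k}$ is absorbed by the vanishing of $f_1$ merely makes explicit details the paper leaves to the reader.
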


\begin{proof} Apply Theorem \ref{th_main} to the following functions:
\begin{align*}
f_1(n) & =  \begin{cases}  (-1)^{n \text{ {\rm (mod $k$)}}}, & \text{ if
} \  k\nmid n,
\\ 0, & \text{ if } \ k \mid n,
\end{cases} \\
f_j(n) & =\left(\left(\frac{n}{k} \right)\right) \quad (2\le j\le m)
\end{align*}
and also Lemma \ref{lemma}/(iv).
\end{proof}

\begin{corollary} Assume that $m=2$. Let $k\in \N$ be odd, $h_1,h_2\in \Z$, $h_1$ odd, $\gcd(h_1,k)=\gcd(h_2,k)=1$. Then
\begin{equation*}
\sum_{a=1}^{k-1} (-1)^{a+\lfloor ah_1/k\rfloor} \left(\left(\frac{ah_2}{k} \right)\right)
= \frac1{2k} \sum_{a=1}^{k-1} \tan \left(\frac{\pi ah_2}{k}\right)
\cot \left(\frac{\pi ah_1}{k}\right).
\end{equation*}
\end{corollary}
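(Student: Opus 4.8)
The plan is to obtain this corollary as the case $m=2$ of the preceding Theorem, the remaining work being two bookkeeping reductions: collapsing the constrained double sum defining $B(h_1,h_2;k)$ to a single sum, and re-indexing the trigonometric sum to clear the multiplicative inverses $h_1',h_2'$.

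First I would specialize the Theorem to $m=2$. Since $(-1)^{m/2}=-1$ and $2^{m-1}=2$, it reads
$$B(h_1,h_2;k)=-\frac{1}{2k}\sum_{a=1}^{k-1}\tan\left(\frac{\pi a h_1'}{k}\right)\cot\left(\frac{\pi a h_2'}{k}\right).$$
Next I would unfold the definition of $B(h_1,h_2;k)$. The constraints $a_1+a_2\equiv 0$ and $a_1\not\equiv 0 \pmod k$ let me use $a_1=a\in\{1,\dots,k-1\}$ as genuine integers and $a_2\equiv -a\pmod k$; note that the weight is a $k$-periodic function of $a_1$, so this parametrization is legitimate. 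As $x\mapsto((x))$ is odd and $k$-periodic, $\left(\left(\frac{a_2 h_2}{k}\right)\right)=-\left(\left(\frac{a h_2}{k}\right)\right)$, contributing an overall factor $-1$.

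The two parity hypotheses enter only in simplifying the sign $(-1)^{a h_1+k\lfloor a h_1/k\rfloor}$. I would factor it as $(-1)^{a h_1}(-1)^{k\lfloor a h_1/k\rfloor}$ and use that $h_1$ odd gives $(-1)^{a h_1}=(-1)^{a}$, while $k$ odd gives $(-1)^{k\lfloor a h_1/k\rfloor}=(-1)^{\lfloor a h_1/k\rfloor}$; hence the sign collapses to $(-1)^{a+\lfloor a h_1/k\rfloor}$, which is exactly the weight appearing in the corollary. Together with the previous factor $-1$ this yields
$$B(h_1,h_2;k)=-\sum_{a=1}^{k-1}(-1)^{a+\lfloor a h_1/k\rfloor}\left(\left(\frac{a h_2}{k}\right)\right),$$
and equating this with the specialized Theorem makes the two minus signs cancel.

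Finally I would re-index the trigonometric sum to remove $h_1',h_2'$. The substitution $a\equiv b h_1 h_2\pmod k$ is a bijection of the nonzero residues since $\gcd(h_1 h_2,k)=1$, and it sends $a h_1'$ to $b h_2$ and $a h_2'$ to $b h_1$ modulo $k$; as $\tan$ and $\cot$ have period $\pi$, these congruences suffice. One should also check that no denominator vanishes: $k$ odd keeps every $\tan(\pi j/k)$ finite, and $\gcd(h_2',k)=1$ keeps each $\cot(\pi a h_2'/k)$ defined. This turns the right-hand side into $\frac{1}{2k}\sum_{a=1}^{k-1}\tan(\pi a h_2/k)\cot(\pi a h_1/k)$, which is the asserted identity. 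I do not anticipate a genuine obstacle, since all the depth sits in the Theorem; the only step requiring care is the simultaneous use of both parity hypotheses to collapse the exponent, together with keeping track of the interacting sign factors.
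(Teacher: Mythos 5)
Your proof is correct and follows exactly the route the paper intends: the paper states this corollary without separate proof as the $m=2$ specialization of the preceding theorem on $B(h_1,\ldots,h_m;k)$, and your bookkeeping (collapsing the constrained sum via $a_2\equiv -a_1$ and the oddness of $((\cdot))$, using $h_1$ odd and $k$ odd to reduce the exponent to $a+\lfloor ah_1/k\rfloor$, and reindexing by $a\equiv bh_1h_2 \pmod{k}$ with the $\pi$-periodicity of $\tan$ and $\cot$) supplies precisely the omitted details, with the two sign factors cancelling as you say.
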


For $m=2$ in the special cases $h_1=1$, $h_2=h$ and $h_1=h$, $h_2=1$, respectively, we obtain the identities,
cf. \cite[Eq.\ (15), (17)]{BerGol1984}, \cite[Eq.\ (7.4), (7.6)]{Sit1987}, as follows.

\begin{corollary} If $k\in \N$ is odd, $h\in \Z$, $\gcd(h,k)=1$, then
\begin{equation} \label{repres_s_3}
s_3(h,k)= \frac1{2k} \sum_{a=1}^{k-1} \tan \left(\frac{\pi ah}{k}\right)
\cot \left(\frac{\pi a}{k}\right).
\end{equation}

If $k\in \N$ is odd, $h\in \Z$ is odd, $\gcd(h,k)=1$, then
\begin{equation*}
s_5(h,k)= \frac1{2k} \sum_{a=1}^{k-1} \tan \left(\frac{\pi a}{k}\right)
\cot \left(\frac{\pi ah}{k}\right).
\end{equation*}
\end{corollary}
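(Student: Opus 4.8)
The plan is to obtain both identities as direct specializations of the immediately preceding $m=2$ corollary, which asserts that for odd $k$, odd $h_1$, and $\gcd(h_1,k)=\gcd(h_2,k)=1$,
$$\sum_{a=1}^{k-1} (-1)^{a+\lfloor ah_1/k\rfloor} \left(\left(\frac{ah_2}{k}\right)\right) = \frac{1}{2k} \sum_{a=1}^{k-1} \tan\left(\frac{\pi ah_2}{k}\right) \cot\left(\frac{\pi ah_1}{k}\right).$$
The only work is to choose the pair $(h_1,h_2)$ correctly and to reconcile the summation range with the definition of each Hardy sum.

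For the representation of $s_3(h,k)$, I would set $h_1 = 1$ and $h_2 = h$. Since $1 \le a \le k-1$ forces $\lfloor a/k\rfloor = 0$, the exponent $a + \lfloor ah_1/k\rfloor = a + \lfloor a/k\rfloor$ reduces to $a$, so the left-hand side becomes $\sum_{a=1}^{k-1} (-1)^a ((ah/k))$. This equals $s_3(h,k)$ because in the defining sum over $a \pmod{k}$ the term $a\equiv 0$ contributes $((0))=0$. On the right-hand side $\cot(\pi ah_1/k) = \cot(\pi a/k)$, giving exactly \eqref{repres_s_3}. Here $h_1=1$ is odd and $\gcd(1,k)=1$, so the hypotheses of the preceding corollary are met with no oddness condition imposed on $h$.

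For $s_5(h,k)$ I would instead set $h_1 = h$ and $h_2 = 1$. Now the left-hand side reads $\sum_{a=1}^{k-1} (-1)^{a+\lfloor ah/k\rfloor} ((a/k))$, and again the $a \equiv 0$ term in the definition of $s_5(h,k)$ vanishes since $((0))=0$, so this sum equals $s_5(h,k)$. The right-hand side becomes $\frac{1}{2k} \sum_{a=1}^{k-1} \tan(\pi a/k) \cot(\pi ah/k)$, as claimed. This time the requirement that $h_1=h$ be odd is exactly the hypothesis that $h$ is odd, which is why the second identity carries that extra assumption whereas the first does not.

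I do not anticipate a genuine obstacle, since everything reduces to bookkeeping: simplifying the floor-function exponent, discarding the $a\equiv 0$ term, and checking that the parity and coprimality hypotheses on $h_1$ translate correctly under each substitution. The one point requiring a moment's care is precisely this asymmetry — that $s_3$ needs $h_1=1$ (automatically odd) while $s_5$ needs $h_1=h$ to be odd — which accounts for the differing hypotheses in the two halves of the statement.
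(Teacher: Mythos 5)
Your proposal is correct and coincides with the paper's own derivation: the authors obtain both identities by specializing the preceding $m=2$ corollary to $(h_1,h_2)=(1,h)$ for $s_3$ and $(h_1,h_2)=(h,1)$ for $s_5$, exactly as you do, with the same bookkeeping (the $a\equiv 0$ term vanishing since $\left(\left(0\right)\right)=0$, and the hypothesis ``$h_1$ odd'' explaining why only the $s_5$ identity requires $h$ odd). No gaps.
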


\begin{corollary} Assume that $m=2$. Let $k\in \N$ be odd, $h_1,h_2\in \Z$, $h_1$ even, $\gcd(h_1,k)=\gcd(h_2,k)=1$. Then
\begin{equation*}
\sum_{a=1}^{k-1} (-1)^{\lfloor ah_1/k\rfloor} \left(\left(\frac{ah_2}{k} \right)\right)
= \frac1{2k} \sum_{a=1}^{k-1} \tan \left(\frac{\pi ah_2}{k}\right)
\cot \left(\frac{\pi ah_1}{k}\right).
\end{equation*}
\end{corollary}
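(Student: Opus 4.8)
The plan is to obtain this identity as the $m=2$ specialization of the Theorem on $B(h_1,\ldots,h_m;k)$, exactly parallel to the $h_1$ odd case treated just above; the only substantive difference is the parity bookkeeping of the sign exponent. First I would collapse the constrained double sum defining $B(h_1,h_2;k)$ into a single sum: the condition $a_1+a_2\equiv 0 \pmod{k}$ forces $a_2\equiv -a_1$, while the condition $a_1\not\equiv 0$ makes $a_1$ range over $1,\ldots,k-1$. Since the sawtooth function is odd, $\left(\left(\frac{-a_1h_2}{k}\right)\right)=-\left(\left(\frac{a_1h_2}{k}\right)\right)$, so that
\begin{equation*}
B(h_1,h_2;k)=-\sum_{a=1}^{k-1}(-1)^{ah_1+k\lfloor ah_1/k\rfloor}\left(\left(\frac{ah_2}{k}\right)\right).
\end{equation*}

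Next I would simplify the exponent. Because $k$ is odd, $(-1)^{k\lfloor ah_1/k\rfloor}=(-1)^{\lfloor ah_1/k\rfloor}$; and because $h_1$ is even, $(-1)^{ah_1}=1$. Hence $(-1)^{ah_1+k\lfloor ah_1/k\rfloor}=(-1)^{\lfloor ah_1/k\rfloor}$, and the right-hand sum becomes exactly minus the sum in the statement. This parity step is the sole place where the hypothesis ``$h_1$ even'' is used, and it is precisely what distinguishes this corollary from its $h_1$-odd counterpart, where the same computation instead produces the factor $(-1)^{a+\lfloor ah_1/k\rfloor}$.

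Finally I would invoke the Theorem on $B$ with $m=2$, which gives
\begin{equation*}
B(h_1,h_2;k)=\frac{(-1)^{1}}{2\,k}\sum_{a=1}^{k-1}\tan\left(\frac{\pi ah_1'}{k}\right)\cot\left(\frac{\pi ah_2'}{k}\right),
\end{equation*}
and then convert the inverses $h_1',h_2'$ back into $h_1,h_2$ by the substitution $a= bh_1h_2$, legitimate since $\gcd(h_1h_2,k)=1$ means $b$ runs over a complete set of nonzero residues as $a$ does. Under this reindexing $ah_1'\equiv bh_2$ and $ah_2'\equiv bh_1\pmod{k}$, so, using the period-$k$ invariance of $\tan(\pi\,\cdot\,/k)$ and $\cot(\pi\,\cdot\,/k)$, the trigonometric sum turns into $\sum_{a=1}^{k-1}\tan(\pi ah_2/k)\cot(\pi ah_1/k)$. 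Combining the two displays, the minus sign from the oddness of the sawtooth function and the sign $(-1)^{m/2}=-1$ from the Theorem cancel when solving for the target sum, yielding the claimed formula with coefficient $\tfrac{1}{2k}$. The only care required is in matching these two signs correctly and in verifying that the change of variables sends $h_1'\mapsto h_2$ (rather than to $h_1$); given that the Theorem does all the analytic work, neither constitutes a genuine obstacle.
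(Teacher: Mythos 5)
Your proposal is correct and follows exactly the route the paper intends: the corollary is stated as the immediate $m=2$ specialization of the Theorem on $B(h_1,\ldots,h_m;k)$, and you have merely written out the routine steps the paper leaves implicit (collapsing the constrained sum via $a_2\equiv -a_1$ and the oddness of the sawtooth, simplifying the sign using $k$ odd and $h_1$ even, applying the theorem with $(-1)^{m/2}=-1$, $2^{m-1}=2$, and reindexing $a\mapsto ah_1h_2$ to replace $h_1',h_2'$ by $h_2,h_1$). All signs and the change of variables check out, so nothing further is needed.
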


For $m=2$ in the special case $h_1=h$ even, $h_2=1$  we obtain the identity,
cf. \cite[Eq.\ (13)]{BerGol1984}, \cite[Eq.\ (7.2)]{Sit1987}, as follows.

\begin{corollary} If $k\in \N$ is odd, $h\in \Z$ is even, $\gcd(h,k)=1$, then
\begin{equation} \label{repres_s_1}
s_1(h,k)= \frac1{2k} \sum_{a=1}^{k-1} \tan \left(\frac{\pi a}{k}\right)
\cot \left(\frac{\pi ah}{k}\right).
\end{equation}
\end{corollary}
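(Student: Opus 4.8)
The plan is to obtain this identity as the special case $h_1=h$, $h_2=1$ of the immediately preceding corollary. First I would verify that the hypotheses match: since $h$ is even the choice $h_1=h$ is even; since $\gcd(h,k)=1$ we have $\gcd(h_1,k)=1$, while trivially $\gcd(h_2,k)=\gcd(1,k)=1$; and $k$ is odd, as required. The parent corollary therefore applies and gives
\begin{equation*}
\sum_{a=1}^{k-1} (-1)^{\lfloor ah/k\rfloor} \left(\left(\frac{a}{k}\right)\right) = \frac1{2k} \sum_{a=1}^{k-1} \tan\left(\frac{\pi a}{k}\right) \cot\left(\frac{\pi ah}{k}\right),
\end{equation*}
where on the right I have simply substituted $h_2=1$ into the factor $\tan(\pi ah_2/k)$ and $h_1=h$ into $\cot(\pi ah_1/k)$.

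It then remains only to identify the left-hand side with $s_1(h,k)$. By definition $s_1(h,k)=\sum_{a \, (\mathrm{mod}\ k)} (-1)^{\lfloor ah/k\rfloor}\left(\left(\frac{a}{k}\right)\right)$, where the sum runs over a complete residue system modulo $k$. The term corresponding to $a\equiv 0 \pmod{k}$ contributes $(-1)^{0}\cdot((0))=0$, because $((0))=0$, so the full sum over a complete residue system equals the sum over $a=1,\ldots,k-1$, which is exactly the left-hand side displayed above. Substituting this identification into the displayed equation yields the asserted representation \eqref{repres_s_1} of $s_1(h,k)$.

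Since the argument is a pure specialization, I expect no substantive obstacle. The only points requiring care are the bookkeeping ones: checking that the parity and coprimality hypotheses of the parent corollary are genuinely satisfied under the present assumptions (even $h$, odd $k$, $\gcd(h,k)=1$), and observing that the $a\equiv 0$ term vanishes so that the summation range $a=1,\ldots,k-1$ coincides with the full residue system appearing in the definition of $s_1(h,k)$.
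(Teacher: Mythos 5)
Your proposal is correct and coincides with the paper's own derivation: the paper obtains \eqref{repres_s_1} precisely as the specialization $h_1=h$ (even), $h_2=1$ of the preceding corollary. Your additional bookkeeping --- verifying the parity and coprimality hypotheses and noting that the $a\equiv 0 \pmod{k}$ term vanishes since $\left(\left(0\right)\right)=0$, so the range $a=1,\ldots,k-1$ matches the complete residue system in the definition of $s_1(h,k)$ --- is exactly the (implicit) content of the paper's one-line reduction.
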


Note that the Hardy sums $S(h,k)$ and $s_4(h,k)$ can also be treated with the DFT in the case {{{{when $k$ is}}}} odd. For example,
applying Corollary \ref{coroll_2} to the functions
\begin{equation*}
f_1(n)=f_2(n)=  \begin{cases}  (-1)^{n \text{ {\rm (mod $k$)}}}, & \text{ if
} \  k\nmid n,
\\ 0, & \text{ if } \ k \mid n
\end{cases}
\end{equation*}
we obtain the following representation.

\begin{corollary} If $k\in \N$ is odd, $h_1,h_2\in \Z$, $\gcd(h_1,k)=\gcd(h_2,k)=1$, then
\begin{equation} \label{form_s_4_gen}
\sum_{a=1}^{k-1} (-1)^{a(h_1+h_2) \text{ {\rm (mod $k$)}}} =  \frac1{k} \sum_{a=1}^{k-1} \tan \left(\frac{\pi ah_1}{k}\right)
\tan \left(\frac{\pi ah_2}{k}\right).
\end{equation}
\end{corollary}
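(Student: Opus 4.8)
The plan is to read \eqref{form_s_4_gen} as the $m=2$ instance already announced in the text, namely an application of Corollary \ref{coroll_2} to a single odd function. Concretely, I would set $f_1=f_2=f$, where $f$ is the $k$-periodic function of Lemma \ref{lemma}/(iv) (this is where oddness of $k$ first enters, both to make $f$ odd and to put the tangent DFT at our disposal). Since $f$ is odd, the sign exponent in \eqref{Dedekind_m_2} is $s=1$, so Corollary \ref{coroll_2} gives at once
\[
\sum_{a\text{ (mod $k$)}} f(ah_1)f(ah_2) = -\frac1k \sum_{a\text{ (mod $k$)}} \widehat f(ah_2)\,\widehat f(ah_1).
\]
The proof then reduces to evaluating each side separately.

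The right-hand side is the routine half. By Lemma \ref{lemma}/(iv) we have $\widehat f(n)=i\tan(\pi n/k)$ for all $n$, so $\widehat f(ah_2)\widehat f(ah_1)=i^2\tan(\pi ah_1/k)\tan(\pi ah_2/k)=-\tan(\pi ah_1/k)\tan(\pi ah_2/k)$. The factor $-1$ cancels the $-1$ coming from $s=1$, leaving $\tfrac1k\sum_a \tan(\pi ah_1/k)\tan(\pi ah_2/k)$; and because the term $a\equiv 0$ contributes $\tan 0=0$, the summation range shrinks to $a=1,\dots,k-1$, which is exactly the right-hand side of \eqref{form_s_4_gen}.

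The left-hand side carries the actual content. Using $\gcd(h_1,k)=\gcd(h_2,k)=1$, the quantity $ah_j$ is a multiple of $k$ precisely when $k\mid a$, so the factors $f(ah_j)$ annihilate the $a\equiv 0$ term and leave $\sum_{a=1}^{k-1}(-1)^{(ah_1\bmod k)+(ah_2\bmod k)}$, where $ah_j\bmod k$ denotes the least nonnegative residue. This must then be matched to the summand $(-1)^{a(h_1+h_2)\bmod k}$ displayed in \eqref{form_s_4_gen}, and that matching is the only genuinely delicate point. It is pure parity bookkeeping: writing $ah_j\bmod k = ah_j-k\lfloor ah_j/k\rfloor$ and using that $k$ is odd, one gets $(ah_j\bmod k)\equiv ah_j+\lfloor ah_j/k\rfloor \pmod 2$, so the product of the two signs is controlled by $a(h_1+h_2)$ together with the two floor terms. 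The subtlety I would watch for is that reducing an exponent modulo $k$ flips its parity whenever a copy of $k$ is removed, since $(-1)^k=-1$; the safest route is therefore to keep the exponents as the two raw residues $ah_1\bmod k$ and $ah_2\bmod k$ and verify the congruence of parities directly, rather than prematurely collapsing them into the single residue $a(h_1+h_2)\bmod k$. Once the two evaluations are combined, \eqref{form_s_4_gen} follows.
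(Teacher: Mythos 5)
Your route is exactly the paper's: the authors also prove \eqref{form_s_4_gen} in one line, by applying Corollary \ref{coroll_2} to $f_1=f_2=f$ with $f$ the function of Lemma \ref{lemma}/(iv), and your treatment of the right-hand side is correct (the $s=1$ sign cancels the $i^2$ from the two tangent transforms, and the $a\equiv 0$ term dies because $\tan 0=0$). But the one point where you went beyond the paper --- refusing to collapse the two residues $ah_1 \bmod k$ and $ah_2 \bmod k$ into the single residue $a(h_1+h_2)\bmod k$ --- is not mere caution: the parity congruence you proposed to ``verify directly'' is in fact \emph{false}, so your final sentence overclaims. What the DFT argument proves is
\[
\sum_{a=1}^{k-1} (-1)^{(ah_1 \bmod k)+(ah_2 \bmod k)} \;=\; \frac1k\sum_{a=1}^{k-1}\tan \left(\frac{\pi a h_1}{k}\right)\tan \left(\frac{\pi a h_2}{k}\right),
\]
and the exponent cannot be replaced by the single least nonnegative residue $a(h_1+h_2)\bmod k$: whenever $(ah_1\bmod k)+(ah_2\bmod k)\ge k$, reducing by the odd modulus $k$ flips the parity --- precisely the danger you named. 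Concretely, for $k=3$, $h_1=h_2=1$, the statement as printed gives $(-1)^{2\bmod 3}+(-1)^{4\bmod 3}=1-1=0$ on the left, while the right side is $\frac13\bigl(\tan^2(\pi/3)+\tan^2(2\pi/3)\bigr)=2$; the two-residue form gives $1+1=2$, as it must. So the exponent in \eqref{form_s_4_gen} is a misprint in the paper (whose one-line proof silently performs the same illegitimate collapse), and it is the two-residue version that the paper's own subsequent deductions actually use: with $h_2=1$ and $h_1=h$ odd one gets $(-1)^{(ah\bmod k)+a}=(-1)^{a(h+1)+\lfloor ah/k\rfloor}=(-1)^{\lfloor ah/k\rfloor}$, recovering $s_4(h,k)$, and with $h_1=h_2=1$ the left side is $k-1$, yielding $\sum_{a=1}^{k-1}\tan^2(\pi a/k)=k^2-k$, whereas the printed exponent would give $0$ there. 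In short: your method and all your computations coincide with the paper's proof and are sound; the correct conclusion of your argument is the displayed two-residue identity, and your instinct to keep the raw residues should have been upgraded from a safety measure to an outright correction of the stated formula.
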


If $h_2=1$ and $h_1=h$ is odd, then the left hand side of \eqref{form_s_4_gen} is exactly $s_4(h,k)$. See \cite[Eq.\ (16)]{BerGol1984},
\cite[Eq.\ (7.5)]{Sit1987}. If $h_1=h_2=1$, then \eqref{form_s_4_gen} provides the following classical identity, valid for $k\in \N$ odd,
cf. \cite[Prop.\ 3.1]{BecHal2010}:
\begin{equation*}
\sum_{a=1}^{k-1} \tan^2 \left(\frac{\pi a}{k} \right)= k^2-k.
\end{equation*}

\begin{remark} {\rm {{{{For $k$ odd, $h$ even}}}}, the formula \cite[Eq.\ (13)]{BerGol1984} receives the following representation:
\begin{equation*}
s_1(h,k)= - \frac1{2k} \sum_{\substack{j=1\\ j\ne (k+1)/2}}^k \cot \left(\frac{\pi h(2j-1)}{2k}\right)
\cot \left(\frac{\pi (2j-1)}{2k}\right),
\end{equation*}
which can easily be transformed into
\begin{equation*}
s_1(h,k)= \frac1{k} \sum_{j=1}^{(k-1)/2} \tan \left(\frac{\pi j}{k}\right)
\cot \left(\frac{\pi hj}{k}\right)
\end{equation*}
that is equal to the right hand side of \eqref{repres_s_1}. Similar considerations are valid for the corresponding
formulas on the Hardy sums $s_4(h,k)$, $s_5(h,k)$ and $S(h,k)$.}
\end{remark}

\begin{remark} {\rm The finite sum identities (7.1), (7.2), (7.3), (7.5), and (7.6) from the paper \cite{Sit1987} contain some misprints.
Namely, in formulas
(7.1) and (7.5) the sum $\sum_{r=1}^{k-1}$ should be $\sum_{r=1}^k$, while in (7.2) and (7.6) the {{{{sum}}}} $\sum_{r=1, r\ne (k+1)/2}^{k-1}$ should be
$\sum_{r=1, r\ne (k+1)/2}^k$, the missing terms being nonzero. Furthermore, in formula (7.3) the sum $\sum_{r=1}^{k-1}$ should be
$\sum_{r=1, r\ne k/2}^{k-1}$, the term for $r=k/2$ (namely $\tan (\pi/2)$) being not defined.}
\end{remark}

{{{{One could possibly}}}} investigate some further higher dimensional
generalizations and analogues of the Hardy sums involving the
Bernoulli functions, however we do not discuss this in the present paper.

\subsection{Sums involving the Hurwitz zeta function} \label{subsect_3_3}

Theorem \ref{th_main} and its corollaries can be applied in {{{{several}}}} other situations as well. For
example, let
\begin{equation*}
\zeta(s,x) :=\sum_{n=0}^{\infty} \frac1{(n+x)^s}
\end{equation*}
be the Hurwitz zeta function, where $0< x\le 1$ and
$\zeta(s,1)=\zeta(s)$ is the Riemann zeta function. The function
\begin{equation*}
D(h,k):=\sum_{a=1}^{k-1} \zeta \left(s_1,\left\{
\frac{ah_1}{k}\right\} \right) \zeta \left(s_2,\left
\{\frac{ah_2}{k} \right\}\right),
\end{equation*}
investigated by M.~Mikol\'as \cite{Mik1957}, is an analogue of the
Dedekind sum \eqref{Ded_Bern_2}, taking into account that
$$B_n(x)= -n\zeta(1-n,x) \quad (n\in \N,\ 0< x\le 1).$$

Let
\begin{equation} \label{period_zeta}
F(s,x) := \sum_{n=1}^{\infty} \frac{e^{\, 2\pi inx}}{n^s} \quad (x\in
{\Bbb R})
\end{equation}
be the periodic zeta function, which converges for $\Re s >0$ if $x\notin
\Z$ and for $\Re s>1$ if ${{{{x}}}}\in \Z$.

Applying Corollary \ref{cor_m_2_gen} to the functions $$f_j(n)=
F\left(s_j,\frac{n}{k}\right)\ \ (n\in \Z,\ 1\le j\le 2),$$
we deduce by Lemma \ref{lemma}/(v) the next new result.

\begin{theorem} Let $k\in \N$, $h_1,h_2 \in \Z$, $\gcd(h_1,k)=\gcd(h_2,k)=1$ and let
$s_1,s_2\in \C$, $\Re s_1, \Re s_2>1$. Then
\begin{equation*}
D(h,k)=(k^{s_1+s_2-1}-1)\zeta(s_1)\zeta(s_2)+k^{s_1+s_2-1}
\sum_{a=1}^{k-1} F\left(s_1,\frac{ah_2}{k}\right) F\left(s_2,-
\frac{ah_1}{k}\right).
\end{equation*}
\end{theorem}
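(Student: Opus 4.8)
The plan is to apply Corollary~\ref{cor_m_2_gen}, in the form \eqref{Dedekind_m_2_general}, to the $k$-periodic functions $f_1(n)=F(s_1,n/k)$ and $f_2(n)=F(s_2,n/k)$. By Lemma~\ref{lemma}/(v) the transform $\widehat{f_j}$ is, up to the factor $k^{1-s_j}$, the Hurwitz zeta function $\zeta(s_j,\{\cdot/k\})$; hence the product of Hurwitz zetas defining $D(h,k)$ will arise on the transform side of \eqref{Dedekind_m_2_general}, while the product of periodic zeta functions will arise on the original side. The only arithmetic care needed is to choose the shifts in the corollary so that the signs and the index swap built into \eqref{Dedekind_m_2_general} line up with the shifts $h_1,h_2$ attached to $D(h,k)$.

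Concretely, I would invoke \eqref{Dedekind_m_2_general} with its first shift equal to $h_2$ and its second shift equal to $-h_1$ (both coprime to $k$). The left-hand side then becomes $\sum_{a\ (\mathrm{mod}\ k)}F(s_1,ah_2/k)F(s_2,-ah_1/k)$, which is the periodic-zeta sum of the statement. On the right-hand side the two built-in features of \eqref{Dedekind_m_2_general} conspire: the argument fed into $\widehat{f_1}$ is $-a(-h_1)=ah_1$ and the argument fed into $\widehat{f_2}$ is $ah_2$, so the transform side collapses to $\tfrac1k\sum_{a\ (\mathrm{mod}\ k)}\widehat{f_1}(ah_1)\widehat{f_2}(ah_2)$, with exactly the pairing $s_1\leftrightarrow h_1$, $s_2\leftrightarrow h_2$ that $D(h,k)$ requires.

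Next I would substitute Lemma~\ref{lemma}/(v). For $k\nmid a$ one gets $\widehat{f_1}(ah_1)\widehat{f_2}(ah_2)=k^{2-s_1-s_2}\zeta(s_1,\{ah_1/k\})\zeta(s_2,\{ah_2/k\})$, and summing over $a=1,\ldots,k-1$ reproduces $k^{2-s_1-s_2}D(h,k)$. The residue class $a\equiv 0$ must be pulled out by hand: there Lemma~\ref{lemma}/(v) gives $k^{2-s_1-s_2}\zeta(s_1)\zeta(s_2)$, while on the original side the $a\equiv 0$ term is $F(s_1,0)F(s_2,0)=\zeta(s_1)\zeta(s_2)$, using $F(s,0)=\zeta(s)$. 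This last evaluation, together with the applicability of Lemma~\ref{lemma}/(v), is exactly where the hypothesis $\Re s_1,\Re s_2>1$ enters (it guarantees convergence of $F(s,0)$ and of the Hurwitz series).

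Assembling the pieces, \eqref{Dedekind_m_2_general} reduces to
\begin{equation*}
\sum_{a=1}^{k-1}F\!\left(s_1,\tfrac{ah_2}{k}\right)F\!\left(s_2,-\tfrac{ah_1}{k}\right)+\zeta(s_1)\zeta(s_2)=k^{1-s_1-s_2}\bigl(D(h,k)+\zeta(s_1)\zeta(s_2)\bigr).
\end{equation*}
Multiplying by $k^{s_1+s_2-1}$ and solving for $D(h,k)$ merges the two $\zeta(s_1)\zeta(s_2)$ contributions into the coefficient $k^{s_1+s_2-1}-1$ and gives the asserted identity. The one genuinely delicate step is the bookkeeping of the $a\equiv 0$ term on both sides: it is precisely its asymmetric appearance — weighted by $k^{1-s_1-s_2}$ on one side and unweighted on the other — that produces the factor $k^{s_1+s_2-1}-1$ rather than a bare power of $k$, so that is where I would concentrate the care; the rest is direct substitution.
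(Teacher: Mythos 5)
Your proposal is correct and follows exactly the paper's route: the paper proves this theorem precisely by applying Corollary~\ref{cor_m_2_gen} to $f_j(n)=F\left(s_j,\frac{n}{k}\right)$ and invoking Lemma~\ref{lemma}/(v), leaving the bookkeeping (choice of shifts, the $a\equiv 0$ term producing the factor $k^{s_1+s_2-1}-1$) implicit. Your write-up simply makes that omitted bookkeeping explicit, and it checks out.
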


\section{Some further remarks} \label{Sect_Final_remarks}

\setcounter{lemma}{1}
\setcounter{corollary}{11}

The following simple and useful result can be applied to obtain
infinite series representations for the Dedekind and Hardy sums.

\begin{lemma} \label{lemma_2} If $f:\N \to \C$ is a $k$-periodic {\rm ($k\in \N$)} odd function, then
\begin{align} \label{series_1}
S(f):= \sum_{r=1}^{\infty} \frac{f(r)}{r} & =  \frac{\pi}{2k}
\sum_{r=1}^{k-1} f(r) \cot \left(\frac{\pi r}{k}\right) \\
\label{series_2} & = - \frac{\pi i}{k^2} \sum_{r=1}^{k-1} r
\widehat{f}(r).
\end{align}
\end{lemma}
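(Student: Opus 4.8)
The plan is to establish the second equality \eqref{series_2} first, directly from the inverse discrete Fourier transform, and then to deduce the cotangent form \eqref{series_1} from it by a purely finite DFT manipulation. First I would record three elementary consequences of the hypotheses: since $f$ is odd and $k$-periodic one has $f(0)=0$; the transform $\widehat f$ is again odd, so $\widehat f(0)=0$; and $\sum_{a=1}^{k-1}\widehat f(a)=0$ (this last follows from $\sum_{a\pmod k}\widehat f(a)=kf(0)=0$ together with $\widehat f(0)=0$). In particular the mean of $f$ over a period vanishes, which is exactly what makes $S(f)=\sum_{r\ge 1}f(r)/r$ converge.

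For \eqref{series_2} I would substitute the inversion formula $f(r)=\frac1k\sum_{a=1}^{k-1}\widehat f(a)e^{2\pi iar/k}$ (the term $a=0$ is absent because $\widehat f(0)=0$) into the definition of $S(f)$. Because the sum over $a$ is finite, I may interchange it with the summation over $r$ — this simultaneously justifies the convergence of $S(f)$ — reducing the problem to evaluating the single series $\sum_{r=1}^{\infty} e^{2\pi i ar/k}/r$ for each $1\le a\le k-1$. Writing $\theta=2\pi a/k\in(0,2\pi)$, this series equals $-\log\!\left(1-e^{i\theta}\right)$, and from $1-e^{i\theta}=-2i\sin(\theta/2)\,e^{i\theta/2}$ one gets
\begin{equation*}
\sum_{r=1}^{\infty}\frac{e^{2\pi iar/k}}{r}=-\log\!\left(2\sin\frac{\pi a}{k}\right)+i\left(\frac{\pi}{2}-\frac{\pi a}{k}\right).
\end{equation*}
Substituting this back, the logarithmic term is invariant under $a\mapsto k-a$ while $\widehat f(a)$ is anti-invariant, so it cancels in the sum; the constant $i\pi/2$ term is killed by $\sum_a\widehat f(a)=0$; and only the linear term $-i\pi a/k$ survives, giving
\begin{equation*}
S(f)=\frac1k\sum_{a=1}^{k-1}\widehat f(a)\left(-\frac{i\pi a}{k}\right)=-\frac{\pi i}{k^2}\sum_{a=1}^{k-1}a\,\widehat f(a),
\end{equation*}
which is \eqref{series_2}.

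To pass from \eqref{series_2} to \eqref{series_1} I would use that for $1\le a\le k-1$ one has $a=k\left(\left(\frac{a}{k}\right)\right)+\frac{k}{2}$. Hence, using $\sum_{a=1}^{k-1}\widehat f(a)=0$,
\begin{equation*}
\sum_{a=1}^{k-1}a\,\widehat f(a)=k\sum_{a=1}^{k-1}\left(\left(\frac{a}{k}\right)\right)\widehat f(a)=k\sum_{a=1}^{k-1}f(a)\,\widehat g(a),
\end{equation*}
where $g(n)=\left(\left(\frac{n}{k}\right)\right)$ and the last step is the symmetry $\sum_a g(a)\widehat f(a)=\sum_a \widehat g(a) f(a)$ of the DFT (immediate from the definitions). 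By Lemma \ref{lemma}/(i), $\widehat g(a)=\frac{i}{2}\cot(\pi a/k)$, so $\sum_a a\,\widehat f(a)=\frac{ik}{2}\sum_{a=1}^{k-1}f(a)\cot(\pi a/k)$; inserting this into \eqref{series_2} and simplifying $-\frac{\pi i}{k^2}\cdot\frac{ik}{2}=\frac{\pi}{2k}$ yields \eqref{series_1}.

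The main obstacle is the careful evaluation of the boundary series $\sum_{r\ge1}e^{i r\theta}/r$. It converges only conditionally for $|z|=1$, $z\ne1$ (Dirichlet's test), so I must invoke Abel's theorem to identify its value with the radial limit of $-\log(1-z)$, and then pin down the principal branch to obtain the exact imaginary part $\frac{\pi}{2}-\frac{\pi a}{k}$; a sign or branch slip here would corrupt precisely the surviving term. Everything else is bookkeeping with the oddness of $\widehat f$ and one application of Lemma \ref{lemma}/(i). As an alternative route avoiding the complex logarithm, one could instead write $S(f)=\lim_{s\to1^+}k^{-s}\sum_{a=1}^{k-1}f(a)\zeta(s,a/k)$, cancel the pole of the Hurwitz zeta function using $\sum_a f(a)=0$, and appeal to the Gauss digamma theorem; but this is more cumbersome and less in keeping with the DFT viewpoint of the paper.
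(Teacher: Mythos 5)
Your proof is correct, but it is genuinely different from what the paper does: the paper does not prove Lemma \ref{lemma_2} from scratch at all. It imports \eqref{series_1} from \cite[Lemma 2.1]{Sit1987}, whose proof goes through Lehmer's generalized Euler constants $\gamma(r,k)$ \cite{Leh1975}, and imports \eqref{series_2} from Berndt's contour-integration argument \cite{Ber1979}; the only thing the paper itself adds is the observation that the \emph{equality} of the two finite sums is a consequence of Corollary \ref{coroll_2} applied to $f$ and $n\mapsto \left(\left(\frac{n}{k}\right)\right)$. Your argument, by contrast, is self-contained: you obtain \eqref{series_2} directly by DFT inversion ($\widehat{f}(0)=0$ kills the $a=0$ term), interchanging the finite sum over $a$ with the series over $r$, and evaluating $\sum_{r\ge 1}e^{ir\theta}/r=-\log\bigl(1-e^{i\theta}\bigr)$ on the boundary via Abel's theorem with the principal branch pinned down correctly — the log term cancels by the anti-invariance of $\widehat{f}$ under $a\mapsto k-a$ (including the fixed point $a=k/2$ for $k$ even, where oddness forces $\widehat{f}(k/2)=0$), the constant $i\pi/2$ dies by $\sum_{a=1}^{k-1}\widehat{f}(a)=kf(0)=0$, and only the linear term survives. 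Your passage from \eqref{series_2} to \eqref{series_1}, via $a=k\left(\left(\frac{a}{k}\right)\right)+\frac{k}{2}$, the symmetry $\sum_a f(a)\widehat{g}(a)=\sum_a g(a)\widehat{f}(a)$, and Lemma \ref{lemma}/(i), is in substance the same mechanism the paper invokes through Corollary \ref{coroll_2} for the equivalence of the two finite sums. What your route buys is a complete proof within the paper's DFT framework, replacing both external ingredients (Euler constants and contour integration) by the elementary boundary evaluation of the logarithm series; what the paper's route buys is brevity and the connection to the Euler-constant interpretation $S(f)=\sum_{r=1}^k f(r)\gamma(r,k)$ that it exploits in the subsequent corollary.
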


For the Dedekind sum, \eqref{inf_repres} is a direct consequence of identities
\eqref{series_1} and \eqref{cot_repres}. As another example, \eqref{series_1} and  \eqref{repres_s_3} imply that for
$k$ odd, $\gcd(h,k)=1$, {{{{one has}}}}
\begin{equation*}
s_3(h,k)  = \frac1{\pi} \sum_{r=1}^{\infty} \frac1{r} \tan
\left(\frac{\pi rh}{k} \right).
\end{equation*}

One could see \cite[Th.\ 1]{BerGol1984}, \cite[Th. 7.1]{Sit1987} for the above formula as well as for similar representations regarding Hardy sums.

Identity \eqref{series_1} of Lemma \ref{lemma_2} was proved in
\cite[Lemma 2.1]{Sit1987} by applying results of D.~H.~Lehmer
\cite{Leh1975} on the generalized Euler constants $\gamma(r,k)$
associated with the infinite arithmetic progression $r,r+k,r+2k,\ldots$
($1\le r\le k$), where
\begin{equation*}
\gamma(r,k):=\lim_{x\to \infty} \left( \sum_{\substack{1\le n\le x\\
n\equiv r \ {\text {\rm (mod $k$)}}}} \frac1{n} -\frac1{k}\log x
\right).
\end{equation*}

B.~C.~Berndt \cite{Ber1979} deduced \eqref{series_2} by contour
integration (with a different definition of the DFT). The fact that
the finite sums \eqref{series_1} and \eqref{series_2} are equal, {{{{provides}}}}
another simple consequence of Corollary \ref{coroll_2}, applied
to the odd functions $f:\N \to \C$ and $n\mapsto \left(\left(
\frac{n}{k}\right)\right)$.

Furthermore, according to \cite[Th.\ 8]{Leh1975}, if $f$ is a
$k$-periodic function, then
\begin{equation*}
S(f) = \sum_{r=1}^k f(r)\gamma(r,k),
\end{equation*}
provided that $\sum_{r=1}^k f(k)=0$, representing a necessary and
sufficient condition for the convergence of the series $S(f)$ (which holds if
$f$ is a $k$-periodic and odd function).

We note that the DFT of the $k$-periodic function $r\mapsto
\gamma(r,k)$ is
\begin{equation*}
\widehat{\gamma}(r,k) = \begin{cases} F\left(1,-\frac{r}{k}\right),
& \text{ if } \ k\nmid r, \\ \gamma, & \text{ if } \ k\mid r
\end{cases}
\end{equation*}
(cf. \cite[p.\ 127]{Leh1975}), where $F(s,x)$ {{{{is}}}} the periodic zeta function defined by
\eqref{period_zeta} and $\gamma:=\gamma(0,1)$ is Euler's constant.
Therefore, we deduce by Corollary \ref{cor_m_2_gen} the next identity.

\begin{corollary} If $f:\N \to \C$ is a $k$-periodic {\rm ($k\in \N$)} odd function, then
\begin{equation*}
S(f)= - \frac1{k} \sum_{r=1}^{k-1} \widehat{f}(r)
F\left(1,-\frac{r}{k}\right).
\end{equation*}
\end{corollary}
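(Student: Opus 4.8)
The plan is to combine Lehmer's finite representation of $S(f)$ with the Parseval-type identity of Corollary \ref{cor_m_2_gen}, applied to $f$ and the generalized Euler constant function $r\mapsto\gamma(r,k)$. First I would record that $f$, being $k$-periodic and odd, satisfies $\sum_{r=1}^k f(r)=0$: the terms pair up as $r\leftrightarrow k-r$ with $f(k-r)=f(-r)=-f(r)$, while the possible fixed points give $f(k)=f(0)=0$ and $f(k/2)=0$ (the latter only when $k$ is even). By \cite[Th.\ 8]{Leh1975} this convergence condition lets me rewrite the infinite series of Lemma \ref{lemma_2} as the finite inner product
\[
S(f)=\sum_{r=1}^k f(r)\,\gamma(r,k),
\]
i.e. as a sum over a complete residue system of the product of the two $k$-periodic functions $f$ and $\gamma(\cdot,k)$.

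Next I would apply Corollary \ref{cor_m_2_gen} with $h_1=h_2=1$, $f_1=f$ and $f_2(r)=\gamma(r,k)$. This yields
\[
\sum_{a \text{ (mod $k$)}} f(a)\,\gamma(a,k)=\frac1k\sum_{a \text{ (mod $k$)}}\widehat{f}(-a)\,\widehat{\gamma}(a,k),
\]
whose left-hand side is exactly $S(f)$ by the previous step. Into the right-hand side I would substitute the DFT of $\gamma(\cdot,k)$ quoted from \cite[p.\ 127]{Leh1975}, splitting the residues into those with $k\mid a$ and those with $k\nmid a$.

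The short step that collapses everything to the stated shape combines the vanishing of the $k\mid a$ term with the parity of $\widehat{f}$. For $k\mid a$ one has $\widehat{\gamma}(a,k)=\gamma$, but $\widehat{f}(-a)=\widehat{f}(0)=\sum_{a}f(a)=0$, so that contribution drops out. For the remaining residues, represented by $a=1,\ldots,k-1$, one has $\widehat{\gamma}(a,k)=F\!\left(1,-\frac{a}{k}\right)$, giving $S(f)=\frac1k\sum_{a=1}^{k-1}\widehat{f}(-a)\,F\!\left(1,-\frac{a}{k}\right)$. Finally, since $f$ is odd its transform $\widehat{f}$ is also odd, so $\widehat{f}(-a)=-\widehat{f}(a)$; this produces the overall minus sign and yields precisely the claimed identity. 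I do not anticipate a genuine obstacle here: the only points requiring care are the disappearance of the $k\mid a$ term (which rests on $\widehat{f}(0)=0$, hence on oddness) and the sign introduced by $\widehat{f}(-a)=-\widehat{f}(a)$, both immediate once the DFT of $\gamma(\cdot,k)$ is taken as given from \cite{Leh1975}.
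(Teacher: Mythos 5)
Your proposal is correct and follows essentially the same route the paper sketches: writing $S(f)=\sum_{r=1}^k f(r)\gamma(r,k)$ via Lehmer's Theorem 8 (justified by $\sum_{r=1}^k f(r)=0$ from oddness), then applying Corollary \ref{cor_m_2_gen} with $h_1=h_2=1$ together with Lehmer's formula for $\widehat{\gamma}(r,k)$, killing the $k\mid a$ term by $\widehat{f}(0)=0$ and extracting the minus sign from the oddness of $\widehat{f}$. The paper leaves these verifications implicit, and your write-up supplies exactly the missing details with no gaps.
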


{\bf Acknowledgement.} The authors would like to thank the referee for useful remarks which helped improve the presentation of the paper.

\vspace{10mm}

\vskip3mm

\noindent Michael Th. Rassias \\ Department of Mathematics, ETH-Z\"{u}rich, R\"{a}mistrasse 101,
8092 Z\"{u}rich, Switzerland \\ Department of Mathematics, Princeton University, Fine Hall,
Washington Road, Princeton, NJ 08544-1000, USA \\
E-mail: {\tt michail.rassias@math.ethz.ch,
michailrassias@math.princeton.edu}

\vskip3mm

\noindent L\'aszl\'o T\'oth \\ Department of Mathematics, University of P\'ecs, Ifj\'us\'ag \'utja
6, 7624 P\'ecs, Hungary \\
E-mail: {\tt ltoth@gamma.ttk.pte.hu}

\end{document}